\newdefinition{defn}{Definition}[section]
\newdefinition{nim}[defn]{}
\newdefinition{rem}[defn]{Remark}
\newdefinition{ex}[defn]{Example}
\newtheorem{thm}{Theorem}[section]
\newtheorem{cor}[thm]{Corollary}
\newtheorem{lem}[thm]{Lemma}
\newtheorem{prop}[thm]{Proposition}
\newtheorem{con}[thm]{Conjecture}
\newcommand{\Hom}{\mathrm{Hom}}
\newcommand{\Ob}{\mathrm{Ob}}
\newcommand{\hy}{\mathrm{hyp}}
\newcommand{\F}{\mathcal{F}}
\newcommand{\End}{\mathrm{End}}
\newcommand{\Aut}{\mathrm{Aut}}
\newcommand{\Oc}{\mathcal{O}}
\newcommand{\dom}{\mathrm{dom}}
\newcommand{\co}{\mathrm{cod}}
\newcommand{\J}{\mathfrak{J}}
\newcommand{\Ker}{\mathrm{Ker}}
\newcommand{\Res}{\mathrm{Res}}
\newcommand{\Br}{\mathrm{Br}}
\newcommand{\hookdoubleheadrightarrow}{%
  \rightarrowtail\mathrel{\mspace{-15mu}}\rightarrow
}
\begin{document}

\begin{frontmatter}



\title{Stable unital basis, hyperfocal subalgebras and basic Morita equivalences}

\author{Tiberiu Cocone\c t}
\ead{tiberiu.coconet@math.ubbcluj.ro}
\address{Babe\c s-Bolyai University,  Faculty of Economics and Business Administration, Str. Teodor Mihali, nr. 58-60, RO-400591 Cluj-Napoca, Romania\\Department of Mathematics, Technical University of Cluj-Napoca, Str. G. Baritiu 25,
 Cluj-Napoca 400027, Romania}

\author{Constantin-Cosmin Todea \corref{cor1}}
\ead{Constantin.Todea@math.utcluj.ro}
\address{Department of Mathematics, Technical University of Cluj-Napoca, Str. G. Baritiu 25,
 Cluj-Napoca 400027, Romania}

\cortext[cor1]{Corresponding author} \fntext[secondadr]{This work was supported by a grant of the Ministry of Research, Innovation and Digitization,
CNCS/CCCDI–UEFISCDI, project number PN-III-P1-1.1-TE-2019-0136, within PNCDI III.}
\begin{abstract}
We investigate Conjecture 1.6 introduced by Barker and Gelvin in \cite{BaGe}, which says that any source algebra of a $p$-block ($p$ is a prime) of finite group has the unit group containing a basis stabilized by left and right action of the defect group. We will reduce this conjecture to a similar statement about basis of the hyperfocal subalgebras in the source algebra. We will also show that such unital basis of source algebras of two $p$-blocks, stabilized by left and right action of the defect group, are transported trough basic Morita equivalences.
\end{abstract}

\begin{keyword} finite, group, source algebra, hyperfocal subalgebra, unital basis, fusion system, basic, Morita equivalence

\MSC 20C20 
\end{keyword}

\end{frontmatter}


\section{Introduction} \label{sec1}
Let $\mathcal{\Oc}$ be a complete local noetherian commutative  ring with identity element and with an algebraically closed residue field $k$ of prime characteristic $p$. We understand any $\Oc$-module and any  algebra over $\Oc$ to be finitely generated and free over $\Oc$. Any $\Oc$-algebra $B$ which we consider has an identity element $1_B$ and the group of unities is denoted by $B^{\times}$. A basis for an algebra over $\Oc$ is an $\Oc$-basis as an $\Oc$-module. Such a basis is said to be \textit{unital} if it has  any  element unital.

In this paper we consider $G$ to be a finite group such that $p$ divides the order of $G$  and we assume familiarity with: the theory of $G$-algebras, Brauer maps, Brauer pairs, pointed groups, (almost-)source algebras, as in \cite{Libo1}, \cite{Libo2} and \cite{The}. For notations and results with respect to fusion systems we follow \cite{AKO} or the algebraic approach given by David Craven in his book  \cite{Cr}. 

Let $b$ be block idempotent $\Oc G$ with defect group $D$ (a $p$-subgroup of $G$) and $l\in (\Oc Gb)^D$ be a primitive idempotent such that $\Br_D^{\Oc G}(l)\neq 0$. In this paper, if otherwise is not stated, we use the notation $A:=l\Oc Gl$;  it is an  interior $D$-algebra called the \textit{source algebra} of $b$. The source algebra $A$ has a $D\times D$-stable  basis on which $D\times 1$ and $1\times D$ act freely. Following \cite[Section 6]{BaGe} this means that $A$ is a \textit{bifree bipermutation}  $D$-algebra. In the same paper the authors explore  the  following conjecture.
\begin{con}\label{conj-BaGe}(\cite[Conjecture 1.6]{BaGe}) For any block $b$ of $\Oc G$, any source algebra $A$  of $\Oc Gb$ has a $D\times D$-stable unital basis.
\end{con}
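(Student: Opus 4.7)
\medskip

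\noindent\textbf{Proof proposal.} My plan is to reduce the conjecture to an analogous statement about the hyperfocal subalgebra of $A$, invoking Puig's hyperfocal subalgebra theorem. Let $\mathcal{F}=\mathcal{F}_{D}(b)$ be the fusion system of $b$ on $D$, and let $H$ be the hyperfocal subgroup of $\mathcal{F}$, which is normal in $D$. Puig's theorem supplies a $D$-stable unitary subalgebra $A_{\hy}\subseteq A$ carrying an interior $H$-algebra structure, with multiplication inducing a bimodule isomorphism $\mathcal{O}D\otimes_{\mathcal{O}H}A_{\hy}\cong A$. Fixing a left transversal $T$ of $H$ in $D$ then yields the internal decomposition $A=\bigoplus_{t\in T}tA_{\hy}=\bigoplus_{t\in T}A_{\hy}t$, where the second equality rests on the normality of $H$ and the $D$-stability of $A_{\hy}$.

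The core reduction claim I would formulate and prove is: \emph{if $A_{\hy}$ admits a unital basis $\mathcal{B}$ which is simultaneously $H\times H$-stable under left/right multiplication and $D$-stable under conjugation, then $\mathcal{B}':=\{tb\mid t\in T,\ b\in\mathcal{B}\}$ is a $D\times D$-stable unital basis of $A$.} Unitality is immediate, as each $tb$ is a product of two units of $A$. For $(D\times D)$-stability, I would take $d_{1},d_{2}\in D$ and $tb\in\mathcal{B}'$, write $d_{1}t=t_{1}h_{1}$ with $t_{1}\in T$, $h_{1}\in H$, and treat $d_{2}^{-1}$ through the analogous transversal decomposition; then repeatedly push transversal elements past elements of $A_{\hy}$, using normality of $H$ in $D$ to shunt $H$-factors across conjugations, and using the $D$-stability of $A_{\hy}$ so that $T$-conjugation permutes $\mathcal{B}$. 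All accumulated $H$-factors on either side are reabsorbed via the $H\times H$-stability of $\mathcal{B}$, and the computation closes to a single element $t^{\ast}b^{\ast}\in\mathcal{B}'$.

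The genuine obstacle is then the reduced statement for $A_{\hy}$ itself. In the nilpotent case Puig's theorem gives $A_{\hy}=\mathcal{O}H$, so $H$ is the required basis and the conjecture holds trivially. In general I would attempt an induction along a suitable normal series of $H$ inside $D$, using Puig's characterisation of $A_{\hy}$ as the smallest $D$-stable unitary subalgebra generated by $\mathcal{F}$-prescribed $p$-power commutator data, and lifting a unital basis of the inductive ``quotient stratum'' back to $A_{\hy}$. I expect the reduction itself to be a direct if bookkeeping-heavy verification, whereas producing the refined unital basis on $A_{\hy}$ is the hard part: it is morally the original conjecture concentrated in a smaller and more rigid algebra, but still requires a genuinely non-trivial construction that the known structure theory of hyperfocal subalgebras does not immediately provide.
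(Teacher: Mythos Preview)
The statement you are addressing is a \emph{conjecture}, and the paper does not claim to prove it; it remains open. What the paper actually establishes is precisely the reduction you describe: it shows (Theorem~\ref{thm11}, via Proposition~\ref{prop142}) that if the hyperfocal subalgebra $\tilde{A}$ with respect to some normal $\tilde{D}\unlhd D$ containing $\hy(\F)$ has a $D\times^{\bar D}D$-stable unital basis, then $A=\tilde{A}\otimes_{\tilde D}D$ has a $D\times D$-stable unital basis. Your condition ``$H\times H$-stable under left/right multiplication and $D$-stable under conjugation'' on $\mathcal{B}$ is exactly $D\times^{\bar D}D$-stability once one notes $D\times^{\bar D}D=(H\times H)\Delta(D)$, and your transversal construction $\mathcal{B}'=\{tb:t\in T,\,b\in\mathcal{B}\}$ is the internal version of the paper's basis $\bigcup_{u\in S}\Omega\otimes_N u$ in the crossed-product model. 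So the reduction step of your proposal matches the paper both in spirit and in mechanism.

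Beyond the reduction, however, you are not proving the conjecture, and neither is the paper. The paper records your ``reduced statement for $A_{\hy}$'' as a separate Conjecture~\ref{conj-hyp} and verifies it only in special cases (Theorem~\ref{thm12}, under the normalizer-type condition $\F_{D\times^{\bar D}D}=N_{\F_{D\times^{\bar D}D}}(D)$). Your proposed induction ``along a suitable normal series of $H$ inside $D$'' is not carried out and, as you yourself concede, the known structure theory of hyperfocal subalgebras does not supply the missing construction. In particular, the description of $A_{\hy}$ as ``the smallest $D$-stable unitary subalgebra generated by $\F$-prescribed $p$-power commutator data'' is not a characterisation appearing in Puig's theorem or anywhere in the paper, and it is unclear what inductive leverage it would afford. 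The honest summary is: your reduction is correct and coincides with the paper's Theorem~\ref{thm11}; your sketch for the remaining step is a hope rather than an argument, and the conjecture remains open.
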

We will reduce this conjecture to a conjecture on hyperfocal subalgebras.  For any subgroup $R$ of $D$ there is a unique block $e_R$ of $kC_G(R)$ such that $\Br_R^{\Oc G}(l)e_R\neq 0.$ Fixing $(D,e_D),$ a maximal $(\Oc G,b,G)$-Brauer pair, we  define a saturated fusion system which we denote by $\F:=\F_{(D,e_D)}(\Oc G,b, G),$  the saturated fusion system of $A$ on $D$, associated with $b$ and given by the $(\Oc G,b, G)$-Brauer pairs included in $(D,e_D).$  The hyperfocal subgroup of a block was introduced by Puig \cite{Puhy} and, in the language of fusion systems,  is defined by
\[\hy(\F):=<u\varphi(u^{-1})|R\leq D,u\in R,\varphi\in \mathcal{O}^p(\Aut_{\F}(R))>\] It is an important invariant of a block, since a  block is nilpotent if and only if its hyperfocal subgroup is trivial. Let $\tilde{D}$ be a normal subgroup of $D$ containing $\hy(\F)$. By \cite[Theorem 1.8]{Puhy} there exists a unique (up to $(A^D)^{\times}$-conjugacy) $D$-stable, unitary subalgebra $\tilde{A}$ of $A$ (called the \textit{hyperfocal subalgebra} with respect to $\tilde{D}$) such that $A=\tilde{A}\otimes_{\tilde{D}}\ D$, see \ref{cross} for more details. The hyperfocal subalgebra $\tilde{A}$ is a $\tilde{D}$-interior $D$-algebra, verifying $\tilde{A}\cap Dl=\tilde{D}l$ and $1_{\tilde{A}}=1_A=l$. Set $\bar{D}:=D/\tilde{D}$ and consider the subgroup $D\times^{\bar{D}}D$ of $D\times D,$ which is recalled in \ref{2.2}. According to \cite[4.1]{Puhy} the hyperfocal subalgebra $\tilde{A}$ is a direct summand of $A$ as $\Oc[D\times^{\bar{D}}D]$-module, hence $\tilde{A}$ has a $D\times^{\bar{D}}D$-stable basis. 
With the above notations we launch the following conjecture.
\begin{con}\label{conj-hyp} For any source algebra $A=l\Oc Gl$ of any block $b$ with defect group $D$ there is a normal subgroup $\tilde{D}$ in $D$ containing $\hy(\F),$ such that the hyperfocal subalgebra $\tilde{A}$ with respect to $\tilde{D}$ has a $D\times^{\bar{D}}D$-stable unital basis.
\end{con}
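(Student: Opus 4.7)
The plan is to attempt Conjecture \ref{conj-hyp} by upgrading the $D\times^{\bar D}D$-stable basis $X$ of $\tilde A$ supplied by \cite[4.1]{Puhy}. First I would partition $X$ into $D\times^{\bar D}D$-orbits and pick a system of orbit representatives. For each representative $x$ that fails to be a unit in $\tilde A$, the idea is to replace $x$ by a unit $x'$ lying in the $\Oc$-span of its orbit, whereupon the whole $D\times^{\bar D}D$-orbit of $x'$ provides a stability-preserving substitute for the orbit of $x$. If every orbit can be upgraded in this way, the union of the new orbits is a $D\times^{\bar D}D$-stable unital basis of $\tilde A$.

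The main tools I would exploit are the $\tilde D$-interior structure of $\tilde A$ and the fact that $1_{\tilde A}=l$. Because $\tilde D$ embeds into $\tilde A^{\times}$, the image of $\Oc \tilde D\cdot l$ and its $D$-conjugates already furnish a large supply of units, as do the invariants $(\tilde A^D)^{\times}$. I would ask whether left multiplication of a non-unital representative by a carefully chosen $D$-fixed unit (or a product of images of elements of $\tilde D$) can turn it into a unit, and whether the choices for different orbits can be made simultaneously without destroying $\Oc$-linear independence. Low-rank sanity checks are provided by nilpotent blocks, where $\hy(\F)=1$ forces $\tilde A=\Oc l$ and the singleton $\{l\}$ is trivially a unital basis, and by blocks with cyclic or normal defect group. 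An induction on $|\tilde D/\hy(\F)|$, or more generally along a chain of normal subgroups $\hy(\F)\leq\tilde D_0\trianglelefteq\tilde D$, should allow a unital basis for the smaller hyperfocal subalgebra to be extended along the $\tilde D/\tilde D_0$-grading of $\tilde A$.

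The principal obstacle, I expect, is the tension between the two requirements: each orbit representative can usually be nudged into a unit individually, but doing so simultaneously across all orbits, while preserving both $\Oc$-linear independence and equivariance for $D\times^{\bar D}D$, is where a direct construction tends to fail. Resolving this will most likely demand either a lifting argument from the image basis in $kGb$, where the semisimple quotient modulo the Jacobson radical is easier to control, or a finer structural analysis of $(\tilde A^D)^{\times}$ and its action on $X$; either route is where the real work would lie.
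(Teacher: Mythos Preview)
The statement you are trying to prove is \emph{Conjecture}~\ref{conj-hyp}: in the paper it is an open problem, not a theorem, and there is no proof of it anywhere in the text. What the paper does instead is (i) reduce Conjecture~\ref{conj-BaGe} to Conjecture~\ref{conj-hyp} via Proposition~\ref{prop142}, and (ii) verify Conjecture~\ref{conj-hyp} only under the extra hypothesis $\F_{D\times^{\bar D}D}=N_{\F_{D\times^{\bar D}D}}(D)$ (Theorem~\ref{thm12}). So there is nothing in the paper against which to compare a purported full proof.

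That said, your plan is essentially a restatement of the orbit-replacement mechanism behind Theorem~\ref{thm21} (the $N$-interior analogue of \cite[Theorem~2.4]{BaGe}): one can upgrade a stable basis to a unital one precisely when each stabilizer $N_{D\times^{\bar D}D}(\omega)$ fixes a unit of $\tilde A$. By Theorem~\ref{thm25} this is in turn equivalent to the statement that $\tilde A^{\times}\cap\tilde A^{\Delta(\phi)}\neq\emptyset$ for every $\phi\in\mathfrak J_{D\times^{\bar D}D}(D)$ with $\tilde A(\phi)\neq0$. In other words, the ``replace each non-unit representative by a unit'' step is not a technicality to be finessed; it is \emph{equivalent} to the conjecture itself. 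Your proposal does not supply any mechanism for producing a $\Delta(\phi)$-fixed unit for a general $\phi$, and the devices you list (multiplying by elements of $\tilde D\cdot l$, by elements of $(\tilde A^D)^{\times}$, or lifting from $kGb$) do not obviously produce $\Delta(\phi)$-invariants when $\phi$ is a non-inner $\F$-isomorphism. The paper's Theorem~\ref{thm12} gets such a unit only by assuming $\phi$ extends to an $\F$-automorphism $\psi$ of $D$ with $\Delta(\psi)\leq D\times^{\bar D}D$, and then invoking Puig's isofusion machinery (Proposition~\ref{lem41}); without that extension hypothesis no argument is known. The induction you sketch along $\tilde D/\tilde D_0$ would require knowing the conjecture for the smaller hyperfocal subalgebra as the base case, which is again the open problem.
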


Since $A$ is a direct summand of $\Oc G$ as  $\Oc[D\times^{\bar{D}}D]$-module, $\tilde{A}$ inherits a $D\times^{\bar{D}}D$-stable basis on which $\tilde{D}\times 1$ and $1\times \tilde{D}$ acts free. So $\tilde{A}$ has a basis which we call $\bar{D}$-bifree, see \ref{2.6}.
An immediate consequence of Proposition \ref{prop142} is the  following theorem.
\begin{thm}\label{thm11}
With the above notations if $b$ is a block of $\Oc G$ with source algebra $A$  verifying Conjecture \ref{conj-hyp} then $A$ verifies Conjecture \ref{conj-BaGe}.
\end{thm}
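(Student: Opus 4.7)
The plan is to lift the given unital basis of $\tilde{A}$ to one of $A$ by exploiting the crossed product decomposition $A = \tilde{A}\otimes_{\tilde{D}} D$. Concretely, fix a $D\times^{\bar{D}}D$-stable unital basis $\tilde{X}$ of $\tilde{A}$ provided by Conjecture \ref{conj-hyp}, choose a transversal $T\subseteq D$ of $\tilde{D}$ in $D$ (with $l\in T$), and set
\[
X := \{\tilde{x}\cdot t \mid \tilde{x}\in\tilde{X},\ t\in T\}\subseteq A.
\]

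The first step is to verify that $X$ is a unital $\Oc$-basis of $A$. Because $A=\bigoplus_{t\in T}\tilde{A}\cdot t$ as an $\Oc$-module and $\tilde{X}$ is an $\Oc$-basis of $\tilde{A}$, the set $X$ is indeed an $\Oc$-basis of $A$. Each $\tilde{x}\cdot t$ is a product of two units of $A$: since $\tilde{A}$ is a unitary subalgebra, $\tilde{X}\subseteq \tilde{A}^{\times}\subseteq A^{\times}$, and $T\subseteq D\subseteq A^{\times}$. Hence every element of $X$ lies in $A^{\times}$.

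The crux is verifying $D\times D$-stability. For $(d_1,d_2)\in D\times D$ and a basis element $\tilde{x}\cdot t\in X$, compute
\[
(d_1,d_2)\cdot(\tilde{x}\cdot t) = d_1\tilde{x}t d_2^{-1} = (d_1\tilde{x}d_1^{-1})\cdot(d_1 t d_2^{-1}).
\]
Set $\tilde{y}:=d_1\tilde{x}d_1^{-1}\in\tilde{A}$; this lies in $\tilde{A}$ because $\tilde{D}\trianglelefteq D$ makes $\tilde{A}$ a $D$-stable subalgebra. Since $(d_1,d_1)\in D\times^{\bar{D}}D$ and $\tilde{X}$ is stable under this subgroup, $\tilde{y}\in\tilde{X}$. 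Next, decompose $d_1 t d_2^{-1}\in D$ uniquely as $t's$ with $t'\in T$ and $s\in\tilde{D}$, and rewrite
\[
\tilde{y}\cdot t's = \tilde{y}\cdot(t' s t'^{-1})\cdot t' = (\tilde{y}\cdot s')\cdot t',
\]
where $s':= t' s t'^{-1}\in\tilde{D}$ by normality. Because $(1,s'^{-1})\in D\times^{\bar{D}}D$, stability of $\tilde{X}$ again forces $\tilde{y}\cdot s'\in\tilde{X}$, so $(d_1,d_2)\cdot(\tilde{x}\cdot t)=(\tilde{y}\cdot s')\cdot t'\in X$.

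The main obstacle, as I see it, is purely bookkeeping: one must split $d_1 t d_2^{-1}$ along the transversal and then transport the leftover $\tilde{D}$-piece back into $\tilde{X}$. What makes both reabsorption steps work is that $\tilde{X}$ is stable under the full $D\times^{\bar{D}}D$-action rather than only under $\tilde{D}\times\tilde{D}$; without this strengthening the $D$-conjugation step $\tilde{x}\mapsto d_1\tilde{x}d_1^{-1}$ would leave $\tilde{X}$. Given the framework already set up in the excerpt, the verification should essentially be Proposition \ref{prop142} in disguise.
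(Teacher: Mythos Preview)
Your argument is correct and coincides with the paper's approach: the paper deduces Theorem~\ref{thm11} as an immediate consequence of Proposition~\ref{prop142}, and what you have written is exactly the proof of Proposition~\ref{prop142} specialised to the $\tilde{D}$-interior $D$-algebra $\tilde{A}$ with $A\cong\tilde{A}\otimes_{\tilde{D}}D$---as you yourself note in the final sentence. One small notational slip: you write ``with $l\in T$'', but $l$ is the identity idempotent of $A$, not an element of $D$; you mean $1_D\in T$.
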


In the  first main result we obtain some cases of blocks for which Conjecture \ref{conj-hyp} is true. This theorem extends \cite[Proposition 1.7]{BaGe}. Applying the results of  Subsection \ref{subsec31} we obtain the subcategory $\F_{D\times^{\bar{D}}D}$ of $\F,$ on the $p$-group $D,$ which has as objects all subgroups in $D$ and as morphisms all maps $\phi\in \mathrm{Mor}(\F)$ such that $\Delta(\phi)\leq D\times^{\bar{D}}D,$ where $\Delta(\phi)$ is a subgroup of $D\times D$ defined in \ref{subsec24}.  In the same spirit, applying the general results of Subsection \ref{subsec32}, we define the subcategory $N_{\F_{D\times^{\bar{D}}D}}(D)$ of $\F_{D\times^{\bar{D}}D}$ on the $p$-group $D.$

\begin{thm}\label{thm12}
Let $b$ be block idempotent $\Oc G$ with defect group $D,$ with  $A=l\Oc Gl$ the source algebra of $b$ and $\F$ be the saturated fusion system of $A$ on $D.$  Let $\tilde{D}$ be a normal subgroup of $D$ such that $\hy(\F)\leq\tilde{D}$ and $\tilde{A}$ be the hyperfocal subalgebra with respect to $\tilde{D}.$ If $\F_{D\times^{\bar{D}}D}=N_{\F_{D\times^{\bar{D}}D}}(D)$ then $\tilde{A}$ verifies Conjecture \ref{conj-hyp}.
\end{thm}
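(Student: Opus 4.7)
The plan is to analyze the $D\times^{\bar{D}}D$-orbit decomposition of a $\bar{D}$-bifree basis of $\tilde{A}$ (which exists by the remark preceding the theorem) and to show that, under the categorical hypothesis, every orbit can be chosen to contain a unit. Writing $\tilde{A}$ as a direct sum of transitive $\mathcal{O}[D\times^{\bar{D}}D]$-permutation modules, each orbit is isomorphic, as a $D\times^{\bar{D}}D$-set, to $(D\times^{\bar{D}}D)/\Delta(\phi)$ for some subgroup $\Delta(\phi)\leq D\times^{\bar{D}}D$, and the machinery of Subsection \ref{subsec31} identifies this $\phi$ with a morphism in $\F_{D\times^{\bar{D}}D}$. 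Reducing to a choice of one orbit representative for each $D\times^{\bar{D}}D$-conjugacy class of such stabilizers is standard.

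Next I would apply the hypothesis $\F_{D\times^{\bar{D}}D}=N_{\F_{D\times^{\bar{D}}D}}(D)$. This says that every morphism $\phi$ in the subcategory is, up to pre- and post-composition with inclusions, the restriction of an automorphism $\varphi\in\Aut_{\F_{D\times^{\bar{D}}D}}(D)$. Therefore every stabilizer $\Delta(\phi)$ is $D\times^{\bar{D}}D$-conjugate to a subgroup of some $\Delta(\varphi)$ with $\varphi\in\Aut_{\F}(D)$ satisfying $u^{-1}\varphi(u)\in\tilde{D}$ for all $u\in D$. Consequently, every orbit representative in $\tilde{A}$ can be replaced by an element arising as a translate of a representative of the $\Delta(\varphi)$-fixed line.

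The third step produces the needed units. For each $\varphi\in\Aut_{\F}(D)$ the classical result of Puig (controlling $N_A(D)^{\times}$ by $N_G(D,e_D)$, and its image in $\Aut(D)$ being exactly $\Aut_{\F}(D)$) yields a unit $a_\varphi\in A^{\times}$ with $a_\varphi\cdot u\cdot a_\varphi^{-1}=\varphi(u)\cdot l$ for every $u\in D$. The key point is that the constraint $\Delta(\varphi)\leq D\times^{\bar{D}}D$, i.e.\ $u^{-1}\varphi(u)\in\tilde{D}$, combined with the decomposition $A=\tilde{A}\otimes_{\tilde{D}}D$ and the characterization $\tilde{A}\cap Dl=\tilde{D}l$, forces a suitable choice of $a_\varphi$ to lie in $\tilde{A}^{\times}$. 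The products $\tilde{d}_1\cdot a_\varphi\cdot \tilde{d}_2$, as $\tilde{d}_1,\tilde{d}_2$ run over $\tilde{D}$ and $\varphi$ over a transversal of $\Aut_{\F_{D\times^{\bar{D}}D}}(D)/\Aut_{\tilde{D}}(D)$, are then units and assemble into a $D\times^{\bar{D}}D$-stable unital basis of $\tilde{A}$.

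The main obstacle I anticipate is the third step: ensuring that the units which realize $\F$-automorphisms of $D$ can actually be chosen inside the hyperfocal subalgebra, not just in the ambient source algebra. A priori $\tilde{A}^{\times}\subsetneq A^{\times}$, and the standard construction of $a_\varphi$ takes place in $A$. Converting the graph-theoretic inclusion $\Delta(\varphi)\leq D\times^{\bar{D}}D$ into the algebraic statement ``$a_\varphi\in\tilde{A}$'' is the heart of the argument; it will require Puig's uniqueness of $\tilde{A}$ up to $(A^D)^{\times}$-conjugacy to adjust $a_\varphi$ by an element of $A^D$ if necessary, together with an analysis of the image of $a_\varphi$ in $A/\tilde{A}\cdot J(\mathcal{O})$ expressed through the crossed-product decomposition $A=\tilde{A}\otimes_{\tilde{D}}D$.
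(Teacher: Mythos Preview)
Your first two steps are essentially the paper's strategy: starting from a $\bar{D}$-bifree $D\times^{\bar{D}}D$-stable basis, identifying each stabilizer as $\Delta(\phi)$, and using the hypothesis to extend $\phi$ to some $\psi\in\Aut_{\F_{D\times^{\bar{D}}D}}(D)$. One point to sharpen: the fact that such a $\phi$ is a morphism of $\F$ is not supplied by Subsection~\ref{subsec31}; it comes from \cite[Theorem 7.2]{BaGe}, applied after observing $\tilde{A}(\phi)\neq 0\Rightarrow A(\phi)\neq 0$ because $\tilde{A}$ is a direct summand of $A$ as $\mathcal{O}[D\times^{\bar{D}}D]$-module.

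The genuine gap is in your third step, in both the production of the unit and the assembly. First, the claim that the products $\tilde{d}_1\, a_\varphi\, \tilde{d}_2$ ``assemble into a $D\times^{\bar{D}}D$-stable unital basis'' is not right: this set of units is $D\times^{\bar{D}}D$-stable, but nothing says it is an $\mathcal{O}$-basis. What is needed is the criterion of Theorem~\ref{thm25} (the $N$-interior analogue of \cite[Theorem 1.5]{BaGe}): it suffices to produce, for each $\phi$ with $\tilde{A}(\phi)\neq 0$, a single unit in $\tilde{A}^{\Delta(\phi)}$; Theorem~\ref{thm21} then swaps each orbit for a unital one. Second, and more seriously, your proposed mechanism for pushing $a_\varphi\in A^{\times}$ into $\tilde{A}^{\times}$ via the crossed-product decomposition and uniqueness of $\tilde{A}$ up to $(A^D)^{\times}$-conjugacy is not sufficient as outlined: uniqueness only tells you that $a_\varphi\tilde{A}a_\varphi^{-1}$ is $(A^D)^{\times}$-conjugate to $\tilde{A}$, hence that some adjustment of $a_\varphi$ \emph{normalizes} $\tilde{A}$, not that it \emph{lies in} $\tilde{A}$. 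The paper bypasses this entirely by working with isofusions: since $\psi\in\Aut_{\F}(D)$ one has $\psi\in F_{\mathcal{O}Gb}(D_{\lambda_D})$, and Puig's \cite[Proposition 4.2]{Puhy} together with \cite[Proposition 2.14]{Pulo} gives the equality $F_{\tilde{A}}(D_{\tilde{\lambda}_D})=F_A(D_{\lambda'_D})=F_{\mathcal{O}Gb}(D_{\lambda_D})$, so $\psi$ is already an $\tilde{A}$-isofusion from $D_{\{l\}}$ to itself. Proposition~\ref{lem41} then converts this directly into an element $s\in\tilde{A}^{\times}\cap\tilde{A}^{\Delta(\psi)}\subseteq\tilde{A}^{\times}\cap\tilde{A}^{\Delta(\phi)}$. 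That equality of isofusion sets is the substantive input you are missing.
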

 As a consequence of Theorems  \ref{thm11}, \ref{thm12} and Proposition  \ref{prop32} (iv) we obtain the following corollary.
\begin{cor}\label{cor15} Let $b$ be block idempotent $\Oc G$ having a defect group $D,$  with source algebra $A$ and with saturated fusion system $\F$ of $A$ on $D.$

\begin{itemize}
\item[(i)] If all the assumptions of Theorem \ref{thm12} are satisfied then the source algebra $A$ verifies Conjecture \ref{conj-BaGe};
\item[(ii)] If we choose $\tilde{D}=D$  then Theorem \ref{thm12} becomes \cite[Proposition 1.7]{BaGe};
\item[(iii)] If $b$ is a nilpotent block then its source algebra $A$ verifies Conjecture \ref{conj-BaGe}.
\end{itemize}
\end{cor}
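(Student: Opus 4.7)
The plan is to assemble the corollary directly from the three ingredients named in its statement: Theorem \ref{thm11}, Theorem \ref{thm12}, and Proposition 3.2 (iv). Since no new argument is really needed, the work consists mainly of unwinding the definitions in the degenerate cases $\tilde{D}=D$ and $\hy(\F)=1$.

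For part (i), I would simply chain the two theorems. Theorem \ref{thm12} gives, under the hypothesis $\F_{D\times^{\bar{D}}D}=N_{\F_{D\times^{\bar{D}}D}}(D)$, that the hyperfocal subalgebra $\tilde{A}$ with respect to the chosen $\tilde{D}$ has a $D\times^{\bar{D}}D$-stable unital basis, i.e.\ $A$ satisfies Conjecture \ref{conj-hyp}. Theorem \ref{thm11} then immediately promotes this to a $D\times D$-stable unital basis of $A$, which is Conjecture \ref{conj-BaGe}. The only care needed is to confirm that the $\tilde{D}$ chosen in Theorem \ref{thm12} is the same one passed to Theorem \ref{thm11}; but both statements are quantified in exactly the same way, so this is automatic.

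For part (ii), I would specialise the construction to $\tilde{D}=D$. Then $\bar{D}=D/\tilde{D}$ is trivial, so the twisted product degenerates: $D\times^{\bar{D}}D=D\times D$. Consequently the subgroup condition $\Delta(\phi)\leq D\times^{\bar{D}}D$ holds for \emph{every} morphism $\phi$ of $\F$, so $\F_{D\times^{\bar{D}}D}=\F$, and the normaliser subcategory becomes the ordinary $N_{\F}(D)$. The hypothesis of Theorem \ref{thm12} collapses to $\F=N_{\F}(D)$. Moreover, with $\tilde{D}=D$ the hyperfocal subalgebra $\tilde{A}$ coincides with $A$ itself (the decomposition $A=\tilde{A}\otimes_{\tilde{D}}D$ being trivial), and a $D\times^{\bar{D}}D$-stable unital basis of $\tilde{A}$ is a $D\times D$-stable unital basis of $A$. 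Thus Theorem \ref{thm12} reduces verbatim to \cite[Proposition 1.7]{BaGe}.

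For part (iii), I would invoke Proposition 3.2 (iv) (cited in the statement) to ensure that, in the nilpotent case, one can choose $\tilde{D}$ normal in $D$ with $\hy(\F)\leq\tilde{D}$ for which the equality $\F_{D\times^{\bar{D}}D}=N_{\F_{D\times^{\bar{D}}D}}(D)$ holds; this is the expected content of that item, and for a nilpotent block $\hy(\F)=1$ so there is ample freedom in the choice. With such a $\tilde{D}$ in hand, part (i) applies and delivers Conjecture \ref{conj-BaGe} for $A$. I expect the main (and essentially only) obstacle to be the bookkeeping in (ii): verifying that the degeneracy $\tilde{D}=D$ is recognised consistently in every construction of Subsections 3.1 and 3.2, so that the subcategory $\F_{D\times^{\bar{D}}D}$ really does coincide with $\F$ and not with some a priori smaller subcategory that merely appears equal.
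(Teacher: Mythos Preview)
Your proposal is correct and follows the same route the paper intends: the corollary is stated in the paper precisely as a consequence of Theorems \ref{thm11}, \ref{thm12} and Proposition \ref{prop32} (iv), with no separate proof given. The only sharpening needed is in part (iii): Proposition \ref{prop32} (iv) is the case $\tilde{D}=1$ (so $D\times^{\bar{D}}D=\Delta(D)$), which is admissible exactly because $\hy(\F)=1$ for a nilpotent block; with that specific choice your application of part (i) goes through verbatim.
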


Statement (iii) of the above corollary is straightforward, we just want to emphasize a  method which is based on the techniques in \cite{BaGe}.
 
For the rest of the section, we consider another finite group $G'$ and a block idempotent $b'$ of $\Oc G'$ with defect group $D'.$ Let $l'\in (\Oc G'b')^{D'}$ be a primitive idempotent such that $\Br_{D'}^{\Oc G'}(l')\neq 0$. Similar to the case of the block algebra $b,$ we shall use the notations: $A':=l'\Oc G'l'$ is the source algebra of $b,$ for any subgroup $R'$ in $D'$ the block $e'_{R'}$ is the unique block of $kC_{G'}(R')$ such that $\Br_{R'}^{\Oc G'}(l')e'_{R'}\neq 0,$  $\F'$ is the saturated fusion of $A'$ on $D',$ etc. \textit{Basic Morita equivalences} between blocks were introduced by Puig in \cite{Puon}, see also \cite[Corollary 3.6]{Pusu}. It is a Morita equivalence between the block algebras which respects the local structure of the blocks and can be characterized by the  existence of some algebra embedding between interior algebras obtained using the source algebras of the blocks. In Section \ref{sec5} we will recall more details about basic Morita equivalences and, we will prove the second main result of this paper.
\begin{thm}\label{thm16}
Let $b,b'$ be block idempotents as above such that $\Oc Gb$ is basic Morita equivalent to $\Oc G'b'$. If $A$ has a unital $D\times D$-stable basis then $A'$ has a unital $D'\times D'$-stable basis.
\end{thm}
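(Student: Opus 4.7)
The plan is to exploit Puig's characterization of basic Morita equivalence at the level of source algebras (\cite{Puon}, \cite{Pusu}). This provides an isomorphism $\phi:D'\to D$ and an indecomposable endopermutation $\Oc D'$-module $V$ with vertex $D'$ such that, writing $S:=\End_{\Oc}(V)$ and letting $A_{\phi}$ denote $A$ regarded as an interior $D'$-algebra via $\phi$, one has an interior $D'$-algebra isomorphism of the form
\[
A' \otimes_{\Oc} \End_{\Oc}(V^{*}) \;\cong\; S \otimes_{\Oc} A_{\phi}.
\]
Because $\End_{\Oc}(V^{*})$ is again a Dade $P$-algebra, the theorem reduces to the assertion that the property ``has a $D' \times D'$-stable unital basis'' is preserved under tensoring with, and descending from, a Dade $P$-algebra.

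Transport across $\phi$ is immediate: the isomorphism turns $\mathcal{B}$ into a $D'\times D'$-stable unital basis $\mathcal{B}_{\phi}$ of $A_{\phi}$. The central ingredient of the proof is then a lemma producing a $D'\times D'$-stable unital basis $\mathcal{S}$ of any Dade $P$-algebra $S$. The group of units $S^{\times}=\mathrm{GL}(V)$ is preserved by the action $(y_{1},y_{2})\cdot s = \rho(y_{1})\,s\,\rho(y_{2})^{-1}$, where $\rho:D'\to S^{\times}$ is the structural map, and $S^{\times}$ spans $S$ over $\Oc$ since the residue field $k$ is algebraically closed, hence infinite. Beginning with a $D'\times D'$-stable $\Oc$-basis arising from the endopermutation structure of $V\otimes V^{*}$, one modifies each non-unital basis element by a suitable scalar multiple of $1_{S}$ to make it invertible while preserving the $D'\times D'$-orbit partition.

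Given such a basis $\mathcal{S}$, the product set $\mathcal{B}_{\phi}\otimes \mathcal{S}$ is a $D'\times D'$-stable unital basis of $S\otimes A_{\phi}$, since a tensor product of units is itself a unit. Transporting across Puig's isomorphism yields a $D'\times D'$-stable unital basis of $A'\otimes \End_{\Oc}(V^{*})$. The analogous descent argument then pairs with a primitive $D'$-fixed local idempotent $e\in \End_{\Oc}(V^{*})^{D'}$---which exists because $\End_{\Oc}(V^{*})$ is a Dade $P$-algebra---to extract a $D'\times D'$-stable unital basis of $A'$.

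The hardest step is the lemma producing a unital $D'\times D'$-stable basis of a Dade $P$-algebra in the general case where the underlying module $V$ is only endopermutation and not itself a permutation module. In that regime $\End_{\Oc}(V)$ need not be a permutation $\Oc[D'\times D']$-module in the left-right action, so a naive orbit-by-orbit argument fails; one likely has to invoke Dade's classification of endopermutation modules, reducing to relative syzygies of the trivial module, together with the Brauer-map and source-idempotent techniques recalled earlier in the paper.
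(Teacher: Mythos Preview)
Your approach has a genuine gap at its central lemma. You assert that a Dade $P$-algebra $S=\End_{\Oc}(V)$ admits a $D'\times D'$-stable unital basis, but this already fails at the level of having \emph{any} $D'\times D'$-stable basis whenever $V$ is endopermutation without being a permutation $\Oc D'$-module. Indeed, under the two-sided action one has $S\cong V\boxtimes V^{*}$ as $\Oc[D'\times D']$-modules; restricting to $D'\times 1$ yields $V^{\oplus\dim V}$, so if $S$ were a permutation $\Oc[D'\times D']$-module then $V$ would be a permutation $\Oc D'$-module. Basic Morita equivalences whose source module is genuinely endopermutation (not permutation) are plentiful, so the lemma is false in general. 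Your last paragraph concedes that $\End_{\Oc}(V)$ ``need not be a permutation $\Oc[D'\times D']$-module'', but then no appeal to Dade's classification can help: the obstruction is structural, not a matter of locating a clever basis. With no $\mathcal{S}$ available, the tensor basis $\mathcal{B}_{\phi}\otimes\mathcal{S}$ does not exist, and the descent step---itself left unjustified, since $V^{*}$ is indecomposable and hence the only idempotent in $\End_{\Oc D'}(V^{*})$ is $1$---is never reached.

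The paper sidesteps this obstruction by never attempting to build a basis of $A'$ directly. Instead it invokes \cite[Theorem~1.5]{BaGe}, which characterizes the existence of a unital $D'\times D'$-stable basis of $A'$ by a condition on local pointed groups: for every $\F'$-isomorphism $\phi':R_1'\to R_2'$ there must be a multiplicity-preserving bijection, realized by isofusions, between the local points of $R_1'$ and of $R_2'$ on $A'$. Basic Morita equivalence transports fusion systems, local points, and isofusions (properties~(\ref{eq5}),~(\ref{eq3}),~(\ref{eq4})), so the hypothesis on $A$ transfers once one knows that the relative multiplicities $m_{A}(R_{\delta})$ and $m_{A'}(R'_{\delta'})$ agree; this is Proposition~\ref{propmultiplicity}, proved via the embeddings~(\ref{eq2}) and~(\ref{eq2'}).
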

Inertial blocks were introduced by Puig in \cite{Punile}. An \textit{inertial block} is a block which is basic Morita equivalent to its Brauer correspondent.  
\begin{cor}\label{cor17} Any inertial block verifies Conjecture \ref{conj-BaGe}.
\end{cor}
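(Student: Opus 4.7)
The plan is to combine Theorem~\ref{thm16} with Corollary~\ref{cor15}(ii), applied not to $b$ itself but to its Brauer correspondent. By the very definition of an inertial block, there exists a finite group $H$ (for instance $H = N_G(D,e_D)$, in which $D$ is normal) and a block $b'$ of $\mathcal{O}H$ whose defect group can be identified with $D$, such that $\mathcal{O}Gb$ is basic Morita equivalent to $\mathcal{O}Hb'$.

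First, we verify that the fusion system $\F'$ of the source algebra $A'$ of $b'$ on $D$ satisfies $\F' = N_{\F'}(D)$. This follows from the classical identification $\F' = N_{\F}(D)$ of the fusion system of the Brauer correspondent with the normalizer subsystem, together with the fact that $D \trianglelefteq H$: every $\F'$-morphism between subgroups of $D$ is induced by conjugation by an element of $H$ that normalizes $D$, and therefore extends to an automorphism of $D$.

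Next, we apply Corollary~\ref{cor15}(ii) to $b'$ with the choice $\tilde{D} = D$. With this choice $\bar{D}$ is trivial and $D \times^{\bar{D}} D = D \times D$, so the required hypothesis $\F'_{D\times^{\bar{D}}D} = N_{\F'_{D\times^{\bar{D}}D}}(D)$ reduces precisely to $\F' = N_{\F'}(D)$, which was just established. Hence $A'$ carries a $D \times D$-stable unital basis.

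Finally, Theorem~\ref{thm16} transports this basis across the basic Morita equivalence (the relation being symmetric, we may apply the theorem with the roles of $b$ and $b'$ swapped) to produce a $D \times D$-stable unital basis on $A$, so $b$ satisfies Conjecture~\ref{conj-BaGe}. The only non-routine input is the classical identification of the fusion system of the Brauer correspondent with $N_{\F}(D)$; once this is in hand, the argument is an assembly of previously proved statements, and the subtlest point to record is the symmetry of basic Morita equivalence so that the implication in Theorem~\ref{thm16} can be read in the direction needed here.
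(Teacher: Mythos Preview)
Your proposal is correct and follows essentially the approach the paper has in mind: since Corollary~\ref{cor17} is placed immediately after Theorem~\ref{thm16}, the intended argument is precisely to verify Conjecture~\ref{conj-BaGe} for the Brauer correspondent (whose fusion system is $N_{\F}(D)$, so \cite[Proposition~1.7]{BaGe}, i.e.\ Corollary~\ref{cor15}(ii), applies) and then transport the unital stable basis via Theorem~\ref{thm16}. The paper also remarks on a minor shortcut you could take instead: because basic Morita equivalences preserve fusion systems, one has $\F\cong N_{\F}(D)$ for $b$ itself, so \cite[Proposition~1.7]{BaGe} already applies directly to $A$ without invoking Theorem~\ref{thm16}; but your route is equally valid and is the one that actually uses the new theorem.
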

It is known that $e_D$ is a nilpotent block of $kC_G(D)$ with defect group $Z(D)$ and that $e_D$ remains a block of $kN_G(D,e_D)$ with the same maximal $(N_G(D,e_D),e_D,N_G(D,e_D))$-Brauer pair $(D,e_D)$. The fusion system of $e_D$ is $\F_{(D,e_D)}(kN_G(D,e_D),e_D,N_G(D,e_D))$. By \cite[Chapter IV, Proposition 3.8]{AKO}, if we use the language of fusion systems, we can verify that in fact $\F_{(D,e_D)}(kN_G(D,e_D),e_D,N_G(D,e_D))$ is $N_{\F}(D)$, the normalizer fusion subsystem of $D$ in $\F$, see \cite[Definition 4.26 (ii)]{Cr}. In fact, if $b$ is an inertial then it is basic Morita equivalent to $e_D$ as a block of $kN_G(D,e_D)$. Obviously, Corollary \ref{cor17} can be quickly  obtained as a consequence of Corollary \ref{cor15} (ii), more precisely \cite[Proposition 1.7]{BaGe}, since basic Morita equivalences preserve the fusion systems.

In Section \ref{sec2} we extend some results of \cite[Sections 2, 4]{BaGe} from $G$-interior algebras to $N$-interior $G$-algebras, where  $N$ is a normal subgroup of $G$. Section \ref{sec3} has two subsections and we start with the recalling of the concept of a category on a $p$-group, see \cite{Lisi}. In Subsections \ref{subsec31} and \ref{subsec32} we introduce some subcategories on a $p$-group, by considering morphisms which are the identity morphisms on factor groups. In Section \ref{sec4} we will present  the proof of Theorem \ref{thm12}. Section \ref{sec5} is devoted to the proof of the fact that unital stable basis of source algebras are transported between blocks which are basic Morita equivalent, Theorem \ref{thm16}.

If $Q,R$ are two subgroups in some finite group $L$ and $x\in L$  we denote by $c_x:Q\rightarrow R$ the conjugation homomorphism induced by $x$, when ${}^xQ\leq R$ and ${}^xQ:=xQx^{-1}$.  When $G'$ is another finite group, we denote the first and second projection by 
$$\pi:G\times G'\rightarrow G, \pi(x,y)=x,\quad \pi':G\times G'\rightarrow G', \pi'(x,y)=y,$$
for any $(x,y)\in G\times G'.$ If $h:M\rightarrow N$ is a map between two sets and $M_1$ is a subset of $M$ we denote by $h_{|_{M_1}}$ the  restriction map of $h$ to $M_1.$
Sometimes, for the  preciseness of our notations we introduce the multiplication "$\cdot$" in the interior of various relations, most of the time we omit;  "$\cdot$" may signify the multiplication in a group or the action of a group on a module.

 \section{$N$-interior $G$-algebras and stable unital basis} \label{sec2}
 In this section we consider $N$ to be a normal subgroup of $G$. Let $A$ be an $N$-interior $G$-algebra, see \cite[Section 2]{Puhy} and $\bar{G}:=G/N$ be the factor group. The elements of $\bar{G}$ are denoted by $\bar{x}=xN,x\in G.$
 
 \begin{nim}\label{cross} We denote by $A\otimes_N G$ the corresponding\textit{ crossed product} considered as a $G$-interior algebra, namely $A\otimes_N G:=A\otimes_{\Oc N} \Oc G$  endowed with the distributive multiplication
 $$(a\otimes x)(a'\otimes x')=a ({}^xa')\otimes xx',$$
 for any $a,a'\in A,x,x'\in G$. If $\sigma_A:N\rightarrow A^{\times}$ is the structural map of $A$ as $N$-interior algebra then $\sigma_{A\otimes_N G}:G\rightarrow (A\otimes_N G)^{\times}$ is given by $\sigma_{A\otimes_N G}(x)=1_A\otimes x,$ for any $x\in G$. It is known that $A\otimes_N G$ is a strongly $\bar{G}$-graded algebra, with first component $A$. 
 \end{nim}
 
\begin{nim}\label{2.2}The next subgroup of $G\times G$ is introduced in \cite[2.5.2]{Puhy} $$G\times{}^{\bar{G}}G:=\{(u,v)| \bar{u}=\bar{v},u,v\in G\}.$$ Denoting by $\Delta(G)$ the diagonal subgroup of $G\times G$ it is easy to verify that $(N\times N)\Delta(G)$ is a normal subgroup of $G\times{}^{\bar{G}}G$. Moreover if $N=1$ then $G\times{}^{\bar{G}}G=\Delta(G).$ Since $A$ is an $N$-interior algebra, $A$ has a structure of an $\Oc[N\times N]$-module given by $$(u,v)a=\sigma_A(u)a\sigma_A(v^{-1}), \forall u,v\in N, \forall a\in A.$$ Similarly $A\otimes_N G$ has a structure of an $\Oc[G\times G]$-module and, $A$ is a direct summand of $A\otimes_N G$ as $\Oc[G\times^{\bar{G}}G]$-module, with the action 
 $$(u,v)a:=(uv^{-1})\cdot {}^va={}^uauv^{-1},$$
for any $(u,v)\in G\times{}^{\bar{G}}G, a\in A.$
\end{nim}
\begin{rem} Let $B$ be an $\Oc[H\times H]$-module with $\Omega$ be an $H\times H$-stable $\Oc$-basis and $\omega\in\Omega$. We shall use the notation
$$N_{H\times H}(\omega):=\{(u,v)|(u,v)\omega=\omega, u,v\in A\}.$$
\end{rem}
The next theorem has the similar proof to \cite[Theorem 2.4]{BaGe} adapted to $N$-interior $G$-algebras. We prefer to give some details of the proof, for explicitness, even though our proof is almost a verbatim translation.
\begin{thm}\label{thm21} Let $A$ be an $N$-interior $G$-algebra. Then $A$ has a unital $G\times^{\bar{G}}G$-stable basis if and only if  $A$ has a $G\times^{\bar{G}}G$-stable basis $\Omega$ such that for any $\omega\in\Omega$ the group $N_{G\times^{\bar{G}}G}(\omega)$ fixes an element of $A^{\times}$.
\end{thm}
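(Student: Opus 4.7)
The forward direction is immediate: if $\Omega$ is a unital $H$-stable basis, with $H := G\times^{\bar{G}}G$, then each $\omega$ already lies in $A^{\times}$ and is $N_H(\omega)$-fixed, so the choice $u_\omega := \omega$ witnesses the right-hand condition.

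For the converse, suppose we are given an $H$-stable basis $\Omega$ together with, for each $\omega \in \Omega$, a unit $u_\omega \in A^{\times}$ fixed by $N_H(\omega)$. The plan is to produce an $H$-equivariant $\Oc$-linear automorphism $\Phi \colon A \to A$ sending each basis element to a unit; then $\Phi(\Omega)$ will be the sought basis. Pick representatives $\omega_1, \dots, \omega_r$ of the $H$-orbits in $\Omega$, set $S_i := N_H(\omega_i)$ and $u_i := u_{\omega_i}$. Because $u_i$ is $S_i$-fixed, the rule $h\omega_i \mapsto h\cdot u_i$ extends to a well-defined $H$-equivariant map $\Omega \to A^{\times}$; the image consists of units because the action of \ref{2.2}, $(u,v)\cdot a = \sigma_A(uv^{-1})\cdot{}^{v}a$, is the composition of left multiplication by $\sigma_A(uv^{-1})\in A^{\times}$ with the ring automorphism ${}^v(-)$ coming from the $G$-action. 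Extending $\Oc$-linearly produces the $H$-equivariant endomorphism $\Phi$, and bijectivity of $\Phi$ is exactly what remains.

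The crux, and main obstacle, is therefore to show that the $u_i$ can be chosen within the nonempty sets $(A^{\times})^{S_i}$ so that $\Phi$ is an automorphism. Because $A$ is $\Oc$-free of finite rank and $\Oc$ is local, Nakayama reduces this to surjectivity of the reduction $\bar\Phi$ on $\bar{A} := A\otimes_{\Oc} k$. The permutation $H$-module $\bar A$ decomposes as $\bigoplus_i \bar{M}_i$, where $\bar M_i$ is the $k$-span of the $H$-orbit of $\bar\omega_i$, and $\bar\Phi$ respects this decomposition. Since $\bar u_i \in \bar{A}^{\times}$, at least one basis coefficient of $\bar u_i$ in $\bar\Omega$ is a unit in $k$; the delicate point is to exploit the freedom in the hypothesis to arrange that this occurs on some element of $\bar O_i$ (which, after possibly rechoosing the orbit representative, we may take to be $\bar\omega_i$), after which $\bar\Phi$ has invertible block-diagonal form and is an isomorphism. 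This adjustment is done exactly as in \cite[Theorem 2.4]{BaGe}, replacing $\Delta(G)$ by $G\times^{\bar{G}}G$ and using the action formula of \ref{2.2} for the $H$-action on the $N$-interior $G$-algebra $A$; with that substitution the proof of loc.\ cit.\ carries over essentially verbatim.
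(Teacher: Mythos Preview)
Your approach is essentially the same as the paper's: both proofs observe the forward direction is trivial, then for the converse choose orbit representatives $\omega_i$ with stabilizers $S_i$, pick units $u_i\in (A^{\times})^{S_i}$, extend $H$-equivariantly to a map $f:\Omega\to A^{\times}$ (your $\Phi$ on basis elements), and finally defer to the argument of \cite{BaGe} to conclude.

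The one noteworthy difference is in what is isolated as the ``check'' before handing off to \cite{BaGe}. The paper singles out the stabilizer identity
\[
N_{G\times^{\bar G}G}(\omega')=N_{G\times^{\bar G}G}(f(\omega'))\qquad(\forall\,\omega'\in\Omega),
\]
and then invokes \cite[Lemmas~2.2, 2.3]{BaGe}; this is precisely the hypothesis those lemmas need, and it is what makes the verbatim translation go through. Your write-up instead sketches the \emph{internal} mechanism of those lemmas (Nakayama reduction to $k$, block-diagonal form of $\bar\Phi$, locating a unit coefficient in the correct orbit) and then still defers to \cite[Theorem~2.4]{BaGe} at the delicate step. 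That is fine as an outline, but note that your phrase ``exploit the freedom in the hypothesis to arrange that this occurs on some element of $\bar O_i$'' is exactly the point that requires the cited lemmas; it is not obvious a~priori that an arbitrary $u_i\in(A^{\times})^{S_i}$ can be so adjusted, and your sketch does not supply an independent argument for it. If you want your version to be self-contained rather than a deferral, you would need to reproduce the content of \cite[Lemmas~2.2, 2.3]{BaGe} here; otherwise, verifying the stabilizer identity above (as the paper does) is the cleaner way to package the reduction.
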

\begin{proof}
The implication from left to right is clear. 

For the other implication, assume  $\Omega$ is a $G\times^{\bar{G}}G$-stable basis such that, for any $\omega\in\Omega,$ the group $N_{G\times^{\bar{G}}G}(\omega)$ fixes an element of $A^{\times}$. Let $T$ be a system of representatives of $G\times^{\bar{G}}G$-orbits in $\Omega.$ For any $\omega \in T,$ we choose $f(\omega)\in A^{\times}\cap A^{N_{G\times^{\bar{G}}G}(\omega)}.$ We extend this choosing to a well-defined map, still denoted by

$$f:\Omega\rightarrow A^{\times},\quad  f((x,y)\omega):=(x,y)f(\omega),$$
for any $(x,y)\in G\times^{\bar{G}}G,\omega\in T.$ For all $\omega'\in\Omega$ the following equality
\begin{equation}\label{eq1}
N_{G\times^{\bar{G}}G}(\omega')=N_{G\times^{\bar{G}}G}(f(\omega'))
\end{equation}
is true, since if $\omega'=(x,y)\omega\in \Omega,$ with $(x,y)\in G\times^{\bar{G}}G, \omega\in T,$ then
\begin{align*}
N_{G\times^{\bar{G}}G}(f(\omega'))&=N_{G\times^{\bar{G}}G}(f((x,y)\omega))={}^{(x,y)}N_{G\times^{\bar{G}}G}(f(\omega))\\&={}^{(x,y)}N_{G\times^{\bar{G}}G}(\omega)=N_{G\times^{\bar{G}}G}(\omega').
\end{align*}
Applying (\ref{eq1}) and the arguments of \cite[Lemma 2.2, Lemma 2.3]{BaGe}, we are done.
\end{proof}
We will show that   stable unital basis are transported  from $A$ to $A\otimes_N G$.
\begin{prop}\label{prop142} Let $A$ be an $N$-interior $G$-algebra. If $A$ has a $G\times^{\bar{G}}G$-stable unital basis then $A\otimes_N G$ has a $G\times G$-stable unital basis.
\end{prop}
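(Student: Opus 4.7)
The plan is to write down an explicit candidate basis of $A\otimes_N G$, check it is unital via an algebra embedding, and then verify $G\times G$-stability by translating back to the given $G\times^{\bar{G}}G$-action on $A$. First I would fix a right transversal $X$ of $N$ in $G$ with $1\in X$; the decomposition $A\otimes_N G=\bigoplus_{x\in X}(A\otimes x)$ as $\Oc$-modules forces
$$\Omega':=\{\omega\otimes x\mid \omega\in\Omega,\ x\in X\}$$
to be an $\Oc$-basis, where $\Omega$ denotes the given $G\times^{\bar{G}}G$-stable unital basis of $A$.

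Next I would check that each element of $\Omega'$ is unital by writing $\omega\otimes x=(\omega\otimes 1)(1\otimes x)$. The first factor is a unit because $a\mapsto a\otimes 1$ is a unital algebra embedding of $A$ into $A\otimes_N G$, sending $\omega\in A^{\times}$ into $(A\otimes_N G)^{\times}$; the second factor equals $\sigma_{A\otimes_N G}(x)$, which has inverse $1\otimes x^{-1}$. Hence their product lies in $(A\otimes_N G)^{\times}$.

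The heart of the argument is the $G\times G$-stability of $\Omega'$. Using the multiplication in \ref{cross}, one computes directly
$$(y_1,y_2)\cdot(\omega\otimes x)=(1\otimes y_1)(\omega\otimes x)(1\otimes y_2^{-1})={}^{y_1}\omega\otimes y_1 x y_2^{-1}.$$
I would then write $y_1 x y_2^{-1}=nx'$ uniquely with $n\in N$ and $x'\in X$, and use the tensor relation $a\otimes nx'=a\sigma_A(n)\otimes x'$ valid in $A\otimes_{\Oc N}\Oc G$ to rewrite the above as $({}^{y_1}\omega\cdot\sigma_A(n))\otimes x'$. The step I expect to be the most delicate is matching this with the $G\times^{\bar{G}}G$-action on $A$ recalled in \ref{2.2}: picking $(u,v):=(y_1,n^{-1}y_1)$ gives $\bar{u}=\bar{v}$ and $uv^{-1}=n$, so $(u,v)\in G\times^{\bar{G}}G$ and $(u,v)\omega={}^u\omega\cdot\sigma_A(uv^{-1})={}^{y_1}\omega\cdot\sigma_A(n)$. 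Since $\Omega$ is $G\times^{\bar{G}}G$-stable this element already belongs to $\Omega$, and therefore $(y_1,y_2)(\omega\otimes x)\in\Omega'$. The construction yields a basis that is unital on the nose, so no appeal to Theorem \ref{thm21} would be needed.
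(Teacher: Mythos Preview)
Your proposal is correct and follows essentially the same route as the paper: the same candidate basis $\{\omega\otimes x\}$ over a transversal of $N$ in $G$, the same rewriting $y_1xy_2^{-1}=nx'$, and the same key observation that $(y_1,n^{-1}y_1)\in G\times^{\bar{G}}G$ sends $\omega$ to ${}^{y_1}\omega\cdot\sigma_A(n)$. The only cosmetic differences are that you verify unitality by factoring $\omega\otimes x=(\omega\otimes1)(1\otimes x)$ rather than exhibiting the explicit inverse ${}^{x^{-1}}(\omega^{-1})\otimes x^{-1}$, and you spell out the tensor relation $a\otimes nx'=a\sigma_A(n)\otimes x'$ that the paper uses implicitly.
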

\begin{proof} 
Let $\Omega$ be a $G\times^{\bar{G}}G$-stable unital basis of $A$ and  $$S:=\{u\in G|\bar{u}\in\bar{G}\}$$ be a system of representatives of left cosets of $N$ in  $G$. It is clear that $$\mathcal{B}:=\bigcup_{u\in S}\Omega\otimes_N u$$ is an $\Oc$-basis of basis of $A\otimes_N G.$ If $\omega\otimes u\in \mathcal{B}$ then $$(\omega\otimes u)({}^{u^{-1}}(\omega^{-1})\otimes u^{-1})=1_A\otimes 1_G=({}^{u^{-1}}(\omega^{-1})\otimes u^{-1})(\omega\otimes u).$$
For $G\times G$-stability, we consider the elements $$(x,y)\in G\times G, \omega\otimes u\in\mathcal{B}, \omega\in\Omega, u\in S.$$ Then, there is a unique $u'\in S$ such that $xuy^{-1}=nu',$ for some $n\in N.$ It follows
$$(x,y)(\omega\otimes u)={}^x\omega\otimes xuy^{-1}={}^x\omega n\otimes u'=((x,n^{-1}x)\omega\otimes u'),$$
which is in $\mathcal{B},$ since $(x,n^{-1}x)\in G\times^{\bar{G}}G.$
Thus, $\mathcal{B}$ is a $G\times G$-stable unital basis of $A\otimes_{N}G$
\end{proof}

For the rest of the section we assume that $G$ is a $p$-group.
\begin{nim} \label{subsec24} We adopt the following notations, see \cite{BaGe}.  The set $\mathfrak{J}(G)$ is the set of all group isomorphisms $\phi$ such that $\dom(\phi)$ and $\co(\phi)$ are subgroups in $G.$ For $\phi\in \mathfrak{J}(G)$ we denote by $\Delta(\phi)$ the subgroup of $G\times G$ formed by the pairs $(\phi(u),u)$ when $u$ runs in $\dom(\phi)$. Moreover
$$\mathfrak{J}_{G\times^{\bar{G}}G}(G):=\{\phi|\phi \in \mathfrak{J}(G), \Delta(\phi)\leq G\times^{\bar{G}}G\},$$
 $$\mathcal{H}:=\{H|H\leq G\times^{\bar{G}}G, H\cap(N\times 1)=(1,1)=H\cap (1\times N)\}.$$ 
\end{nim}

Let $\phi\in\mathfrak{J}_{G\times^{\bar{G}}G}(G).$ We denote by $A(\phi):=A^{\Delta(\phi)}/\Ker(\Br_{\Delta(\phi)}^A)$ the Brauer quotient with respect  to $\Delta(\phi)$. Recall that $$A^{\Delta(\phi)}=\{a\in A|(\phi(u),u)a=a, \text{ for any\ } u\in \dom(\phi)\}.$$
The next lemma is straightforward.
\begin{lem}\label{lem23} There is a bijection $F:\mathfrak{J}_{G\times^{\bar{G}}G}(G)\rightarrow \mathcal{H}$ given by $F(\phi)=\Delta(\phi),$ for any $\phi\in\mathfrak{J}_{G\times^{\bar{G}}G}(G).$

\end{lem}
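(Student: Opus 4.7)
The plan is to verify the three standard conditions for $F$ to be a bijection: well-definedness (the image really lies in $\mathcal{H}$), injectivity, and surjectivity. The first two are essentially formal; surjectivity is the only step with real content, and it is where the two intersection conditions defining $\mathcal{H}$ get used.

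For well-definedness, given $\phi\in\mathfrak{J}_{G\times^{\bar{G}}G}(G)$, by hypothesis $\Delta(\phi)\leq G\times^{\bar{G}}G$. To check $\Delta(\phi)\cap(N\times 1)=\{(1,1)\}$, suppose $(\phi(u),u)\in N\times 1$; then $u=1$, and since $\phi$ is a homomorphism $\phi(u)=1$ as well. Symmetrically, if $(\phi(u),u)\in 1\times N$, then $\phi(u)=1$ and the injectivity of $\phi$ forces $u=1$. For injectivity of $F$, I would note that $\Delta(\phi)=\Delta(\psi)$ forces $\dom(\phi)=\pi'(\Delta(\phi))=\pi'(\Delta(\psi))=\dom(\psi)$ and that the first coordinate of the unique element with second coordinate $u$ recovers both $\phi(u)$ and $\psi(u)$, so $\phi=\psi$.

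The surjectivity step is the main one. Given $H\in\mathcal{H}$, put $Q:=\pi'(H)$ and $P:=\pi(H)$, both subgroups of $G$. I would define $\phi:Q\to P$ by $\phi(u):=v$ whenever $(v,u)\in H$. To see that this is well-defined, suppose $(v,u),(v',u)\in H$; then $(vv'^{-1},1)=(v,u)(v',u)^{-1}\in H$, and since $H\leq G\times^{\bar{G}}G$ we get $\overline{vv'^{-1}}=\bar{1}$, i.e.\ $vv'^{-1}\in N$. Thus $(vv'^{-1},1)\in H\cap(N\times 1)=\{(1,1)\}$, so $v=v'$. The same argument with the roles of the two coordinates swapped, using $H\cap(1\times N)=\{(1,1)\}$, produces a two-sided inverse $\psi:P\to Q$ of $\phi$, so $\phi$ is a bijection. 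Multiplicativity of $\phi$ is immediate from the closure of $H$ under products: if $(\phi(u_1),u_1),(\phi(u_2),u_2)\in H$ then their product $(\phi(u_1)\phi(u_2),u_1u_2)$ lies in $H$, forcing $\phi(u_1u_2)=\phi(u_1)\phi(u_2)$. By construction $\Delta(\phi)=H$, and $\Delta(\phi)\leq G\times^{\bar{G}}G$, so $\phi\in\mathfrak{J}_{G\times^{\bar{G}}G}(G)$ and $F(\phi)=H$.

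The only genuine subtlety is the well-definedness of $\phi$ in the surjectivity step: one has to combine the ambient containment $H\leq G\times^{\bar{G}}G$ with the intersection conditions cutting out $\mathcal{H}$ to convert the a priori relation-valued assignment $u\mapsto\{v:(v,u)\in H\}$ into a single-valued map. Once this is done, the remainder of the argument is mechanical.
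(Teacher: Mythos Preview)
Your proof is correct and follows essentially the same approach as the paper: both establish well-definedness of $F$ via the trivial intersections, and both construct the inverse assignment $H\mapsto\phi_H$ with $\phi_H(u)=v$ for $(v,u)\in H$. The paper packages injectivity and surjectivity together by exhibiting this inverse and checking $F\circ G=\id$, $G\circ F=\id$, whereas you verify injectivity and surjectivity separately; in fact your treatment of the well-definedness of $\phi_H$ (using $H\cap(N\times 1)=\{(1,1)\}$) is more explicit than the paper's terse sketch.
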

\begin{proof}
The above correspondence $F$ is a well-defined map since $$
\Delta(\phi)\cap(G\times 1)=(1\times G)\cap \Delta(\phi)=(1,1).$$

Let $$G:\mathcal{H}\rightarrow \mathfrak{J}_{G\times^{\bar{G}}G}(G), G(H)=\phi_H,$$
with $\phi_H:\pi'(H)\rightarrow\pi(H)$ defined by $\phi_H(u)=v,$ where if $u\in\pi'(H)$ there is $(v,u)\in H$ such that $\pi'(v,u)=u.$

$$(F\circ G)(H)=F(\phi_H)=\Delta(\phi_H)=\{(\phi_H(u),u)|u\in \dom(\phi_H)\}=\pi(H)\times \pi'(H)=H.$$

$$(G\circ F)(\phi)=G(\Delta(\phi))=\phi_{\Delta(\phi)}=\phi.$$
\end{proof}
\begin{nim}\label{2.6}Let $\Omega$ be a $G\times^{\bar{G}}G$-set. We say that $\Omega$ is $\bar{G}$-\textit{bifree} if $N\times 1$ and $1\times N$ (which are subgroups of $G\times^{\bar{G}}G$) act freely on $\Omega.$
\end{nim}

\begin{lem}\label{lem24}
Let $\Omega$ be a $G\times^{\bar{G}}G$-set. Then $\Omega$ is $\bar{G}$-bifree if and only if for any $\omega\in\Omega$ there is $\phi\in \mathfrak{J}_{G\times^{\bar{G}}G}(G)$ such that $N_{G\times^{\bar{G}}G}(\omega)=\Delta(\phi).$
\end{lem}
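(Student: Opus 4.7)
The plan is to reduce the statement to a direct application of Lemma \ref{lem23} by translating $\bar{G}$-bifreeness into a condition on the stabilizers $N_{G\times^{\bar{G}}G}(\omega)$.

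First, I would unpack the definition of bifreeness from \ref{2.6}. Saying that $N\times 1$ acts freely on $\Omega$ means that for every $\omega\in\Omega$ the stabilizer of $\omega$ inside $N\times 1$ is trivial; since this stabilizer is the intersection of $N\times 1$ with $N_{G\times^{\bar{G}}G}(\omega)$, freeness of the $N\times 1$-action is equivalent to
$$N_{G\times^{\bar{G}}G}(\omega)\cap (N\times 1)=\{(1,1)\}\text{ for every }\omega\in\Omega.$$
The same argument applied to $1\times N$ gives the analogous equality. Hence $\Omega$ is $\bar{G}$-bifree if and only if, for every $\omega\in\Omega$, the stabilizer $N_{G\times^{\bar{G}}G}(\omega)$ lies in the set $\mathcal{H}$ introduced in \ref{subsec24}.

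Second, I would invoke Lemma \ref{lem23}. Since $F:\mathfrak{J}_{G\times^{\bar{G}}G}(G)\to\mathcal{H}$, $\phi\mapsto\Delta(\phi)$, is a bijection, the condition $N_{G\times^{\bar{G}}G}(\omega)\in\mathcal{H}$ is equivalent to the existence of some $\phi\in\mathfrak{J}_{G\times^{\bar{G}}G}(G)$ with $\Delta(\phi)=N_{G\times^{\bar{G}}G}(\omega)$. This $\phi$ is uniquely determined and given by $\phi=\phi_{N_{G\times^{\bar{G}}G}(\omega)}$ in the notation of the proof of Lemma \ref{lem23}. Chaining the two equivalences yields the desired statement.

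I do not foresee any real obstacle: the only mildly delicate point is checking that $N_{G\times^{\bar{G}}G}(\omega)$ is actually a subgroup of $G\times^{\bar{G}}G$ (which is immediate because $G\times^{\bar{G}}G$ is the acting group), and that freeness of the actions of $N\times 1$ and $1\times N$ translates into the intersection conditions built into the definition of $\mathcal{H}$. Once these routine identifications are made, Lemma \ref{lem23} supplies the rest.
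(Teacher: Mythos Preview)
Your argument is correct and follows essentially the same route as the paper: translate $\bar G$-bifreeness into the condition that each stabilizer $N_{G\times^{\bar G}G}(\omega)$ lies in $\mathcal{H}$, and then invoke the bijection of Lemma~\ref{lem23}. The paper treats the two implications separately while you phrase them as a single chain of equivalences, but the content is identical.
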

\begin{proof}
Fix $\omega\in \Omega$. For left to right implication, since $\Omega$ is $\bar{G}$-bifree it follows 
$$N_{G\times^{\bar{G}}G}(\omega) \cap (N\times 1)=(1\times N)\cap N_{G\times^{\bar{G}}G}(\omega)=(1,1),$$
hence $N_{G\times^{\bar{G}}G}(\omega)\in\mathcal{H}.$ By Lemma \ref{lem23}, there is $\phi\in \mathfrak{J}_{G\times^{\bar{G}}G}(G)$ such that $N_{G\times^{\bar{G}}G}(\omega)=\Delta(\phi).$

For right to left implication, let $(u_1,1),(u_2,1)\in N\times 1$ such that $(u_1,1)\omega=(u_2,1)\omega.$ It follows that $(u_2^{-1}u_1,1)\in N_{G\times^{\bar{G}}G}(\omega),$ hence there is $\phi\in \mathfrak{J}_{G\times^{\bar{G}}G}(G)$ such that $(u_2^{-1}u_1,1)\in \Delta(\phi).$
\end{proof}

\begin{thm}\label{thm25}Let $A$ be an $N$-interior $G$-algebra, admitting a $\bar{G}$-bifree $G\times^{\bar{G}}G$-stable basis. The following statements are equivalent:
\begin{itemize}
\item[(a)] given $\phi\in \mathfrak{J}_{G\times^{\bar{G}}G}(G)$ such that $A(\phi)\neq 0$ then $A^{\times}\cap A^{\Delta(\phi)}\neq \emptyset;$
\item[(b)] $A$ has a unital $G\times^{\bar{G}}G$-stable basis.
\end{itemize}
\end{thm}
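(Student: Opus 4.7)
The proof proposal goes as follows.

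For (b) $\Rightarrow$ (a), the plan is to read off a unit directly from the unital basis. Suppose $\Omega$ is a unital $G\times^{\bar{G}}G$-stable basis and fix $\phi\in\mathfrak{J}_{G\times^{\bar{G}}G}(G)$ with $A(\phi)\neq 0$. Because $G$ is a $p$-group, $\Delta(\phi)$ is a $p$-subgroup of $G\times^{\bar{G}}G$, so $A$ is a $p$-permutation $\Oc[G\times^{\bar{G}}G]$-module via $\Omega$. Standard $p$-permutation theory identifies the Brauer quotient $A(\phi)$ with the $\Oc$-span of the $\Delta(\phi)$-fixed points of $\Omega$, modulo $\mathfrak{m}$: orbit sums corresponding to non-singleton $\Delta(\phi)$-orbits lie in $\sum_{Q<\Delta(\phi)}\Tr^{\Delta(\phi)}_Q(A^Q)$. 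Hence $A(\phi)\neq 0$ forces the existence of $\omega\in\Omega$ fixed by $\Delta(\phi)$, and this $\omega$ is simultaneously a unit (by unitality of $\Omega$) and an element of $A^{\Delta(\phi)}$, giving $\omega\in A^\times\cap A^{\Delta(\phi)}$.

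For (a) $\Rightarrow$ (b), the idea is to feed the hypothesis into Theorem \ref{thm21}. Take the given $\bar{G}$-bifree $G\times^{\bar{G}}G$-stable basis $\Omega$ of $A$. For any $\omega\in\Omega$, Lemma \ref{lem24} produces $\phi_\omega\in\mathfrak{J}_{G\times^{\bar{G}}G}(G)$ with
\[
N_{G\times^{\bar{G}}G}(\omega)=\Delta(\phi_\omega).
\]
I would then observe $A(\phi_\omega)\neq 0$: the basis element $\omega$ lies in $\Omega^{\Delta(\phi_\omega)}$ by construction, and by the same $p$-permutation argument as above its class in $A^{\Delta(\phi_\omega)}/\Ker(\Br^A_{\Delta(\phi_\omega)})$ is nonzero. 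Invoking assumption (a), I obtain a unit $u_\omega\in A^\times\cap A^{\Delta(\phi_\omega)}=A^\times\cap A^{N_{G\times^{\bar{G}}G}(\omega)}$. Thus for every $\omega\in\Omega$, the group $N_{G\times^{\bar{G}}G}(\omega)$ fixes some element of $A^\times$. Theorem \ref{thm21} then immediately yields a unital $G\times^{\bar{G}}G$-stable basis, which is (b).

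The main substantive issue is the assertion that $A(\phi_\omega)\neq 0$: one must verify carefully that, for the given basis $\Omega$, the class of $\omega$ in the Brauer quotient is nonzero. This requires that $\Omega$ actually gives $A$ a $p$-permutation structure with respect to $\Delta(\phi)$, which is true because $\Omega$ is $G\times^{\bar{G}}G$-stable and $G$ is a $p$-group, so every isotropy subgroup inside $G\times^{\bar{G}}G$ is a $p$-group. Once this is in place, the rest of the argument is a direct translation of the $G$-interior case in \cite{BaGe}, routed through the two preparatory results (Lemma \ref{lem24} and Theorem \ref{thm21}) already established for $N$-interior $G$-algebras, so no further technical obstacle arises.
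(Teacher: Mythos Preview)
Your proposal is correct and follows essentially the same approach as the paper: both directions hinge on the $p$-permutation identification $A(\phi)\neq 0\Longleftrightarrow\Omega^{\Delta(\phi)}\neq\emptyset$, and the implication (a)$\Rightarrow$(b) is obtained exactly as you describe, via Lemma~\ref{lem24} to write $N_{G\times^{\bar{G}}G}(\omega)=\Delta(\phi_\omega)$ and then Theorem~\ref{thm21}. The paper's write-up is terser (it records the chain $\emptyset\neq\Omega^{\Delta(\phi)}\subset A^{\times}\cap A^{\Delta(\phi)}$ for (b)$\Rightarrow$(a) without spelling out the Brauer-quotient justification you give), but the underlying argument is the same.
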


\begin{proof}Let $\Omega$ be $G\times^{\bar{G}}G$-stable basis.

First we assume (a) is true and  that $\Omega$ is also $\bar{G}$-bifree, with $\omega\in \Omega.$ Using Theorem \ref{thm21}, we will show that $N_{G\times^{\bar{G}}G}(\omega)$ fixes a unit of $A.$ By Lemma \ref{lem24}, there is $\phi\in \mathfrak{J}_{G\times^{\bar{G}}G}(G)$ such that $N_{G\times^{\bar{G}}G}(\omega)=\Delta(\phi),$ hence $$\Omega^{\Delta(\phi)}=\Omega^{N_{G\times^{\bar{G}}G}(\omega)}\neq \emptyset.$$
It follows $A(\phi)\neq 0$ and then using statement (a), we obtain $A^{\times}\cap A^{N_{G\times^{\bar{G}}G}(\omega)}\neq \emptyset,$ which is what we need.

Now, we assume the validity of statement (b) and, let $\phi\in \mathfrak{J}_{G\times^{\bar{G}}G}(G)$ satisfying $A(\phi)\neq 0.$ We assume $\Omega$ is also unital. It follows

$$\emptyset\neq \Omega^{\Delta(\phi)}=\Omega^{\times}\cap \Omega ^{\Delta(\phi)}\subset A^{\times}\cap A^{\Delta(\phi)}.$$
\end{proof}
If we take $N=G$ in the next proposition we recover \cite[Lemma 4.1]{BaGe}.
\begin{prop}\label{lem41} Let $U_{\mu}, V_{\nu}$ be pointed groups on $A,$ where $U,V\leq G$. Let $\phi\in\mathfrak{J}_{G\times^{\bar{G}}G}(G)$ such that $\dom(\phi)=V, \co(\phi)=U$ and choose $i\in\mu, j\in \nu.$ The following statements are equivalent:
\begin{itemize}
\item[(a)] there exists $r\in A^{\times}$ such that $(\phi(v)v^{-1})\cdot {}^v(ir)=rj,$ for any $v\in V;$
\item[(b)] there exist $s\in iA^{\Delta(\phi)}j$ and $s'\in j A^{\Delta(\phi^{-1})}i$ such that $i=ss'$ and $j=s's.$

Moreover, the above equivalent conditions do not depend on the choices of $i$ and $j.$
\end{itemize}
\end{prop}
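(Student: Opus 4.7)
The plan is to follow the template of \cite[Lemma 4.1]{BaGe}, adapted to the twisted $(G \times^{\bar{G}} G)$-action carried by an $N$-interior $G$-algebra; the proof amounts to lifting the equations of (a) into the idempotent-theoretic data of (b) and back again, with $N$-conjugation replacing multiplicativity.

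For \textbf{(a) $\Rightarrow$ (b)}, specializing $v = 1$ in the hypothesis yields $ir = rj$, so $r^{-1}ir = j$, and substituting back shows that the hypothesis is equivalent to the conjunction ``$ir = rj$ and $ir \in A^{\Delta(\phi)}$''. I would then set $s := ir = rj$ and $s' := jr^{-1} = r^{-1}i$. The relations $ss' = i$ and $s's = j$ are a one-line computation, and $s = i\cdot(ir)\cdot j$ displays $s \in iA^{\Delta(\phi)}j$. The essential step is to show $jr^{-1} \in A^{\Delta(\phi^{-1})}$: unpacking $ir \in A^{\Delta(\phi)}$ and using ${}^{\phi(v)}i = i$ gives $i\cdot{}^{\phi(v)}r = ir\cdot v\phi(v)^{-1}$; left-multiplying by $r^{-1}$ and inserting $r^{-1}i = jr^{-1}$ produces $jr^{-1}\cdot{}^{\phi(v)}r = j\cdot v\phi(v)^{-1}$; applying the $N$-automorphism ${}^{v\phi(v)^{-1}}$ and using the composition rule $({}^{v\phi(v)^{-1}})\circ({}^{\phi(v)}) = {}^v$ on $A$ rearranges this into $j\cdot{}^v r^{-1}\cdot v\phi(v)^{-1} = jr^{-1}$, i.e.\ the desired $(v,\phi(v))$-invariance.

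For \textbf{(b) $\Rightarrow$ (a)}, a preliminary observation handles the twist: since $v$ fixes $j$, the identity $v\phi(v)^{-1}\cdot{}^{\phi(v)}j\cdot\phi(v)v^{-1} = {}^v j = j$ forces $iA^{\Delta(\phi)}j \subseteq A^{\Delta(\phi)}$, so $s$ itself is $\Delta(\phi)$-fixed. Viewing $s$ and $s'$ as mutually inverse isomorphisms of right $A$-modules $jA \cong iA$, I would extend to a unit $r \in A^\times$ by choosing any right-module isomorphism $t \in (1-i)A(1-j)$ from $(1-j)A$ to $(1-i)A$ — which exists since the source algebra $A$ is semiperfect, so the complements of conjugate idempotents are again conjugate — and setting $r := s + t$. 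Then $ir = s = rj$, and the $\Delta(\phi)$-invariance of $s$ translates directly into $(\phi(v), v)\cdot(ir) = rj$. The ``moreover'' clause follows by transporting a witness $r$ for $(i, j)$ along replacements $i \mapsto cic^{-1}$, $j \mapsto djd^{-1}$, with $c \in (A^U)^\times$, $d \in (A^V)^\times$: the candidate $r_1 := crd^{-1}$ works, using ${}^{\phi(v)}c = c$, ${}^v d = d$, and the conjugation identity $v\phi(v)^{-1}\cdot{}^{\phi(v)}d^{-1}\cdot\phi(v)v^{-1} = d^{-1}$.

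The main obstacle, in contrast to the $G$-interior case of \cite{BaGe}, is that the $(G\times^{\bar{G}}G)$-action on $A$ is genuinely twisted rather than multiplicative, so the familiar trick of inverting an element to pass between $A^{\Delta(\phi)}$ and $A^{\Delta(\phi^{-1})}$ is blocked by the non-invertibility of $ir$. The explicit $N$-conjugation manipulations sketched above are what replace it; once they are in place, the rest of the argument is routine.
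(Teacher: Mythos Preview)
Your proof is correct and follows essentially the same route as the paper: for (a)$\Rightarrow$(b) both set $s=ir=rj$, $s'=jr^{-1}=r^{-1}i$ and verify the $\Delta(\phi^{-1})$-invariance of $s'$ by the same chain of conjugation identities; for (b)$\Rightarrow$(a) both extend $s$ to a unit $r=s+t$ with $t\in(1-i)A(1-j)$, the paper obtaining $t$ via \cite[Exercise~3.2]{The} and you via semiperfectness (a minor point: $A$ here is a general $N$-interior $G$-algebra over $\Oc$, not specifically a source algebra, but it is still semiperfect). You also spell out the ``moreover'' clause, which the paper leaves implicit.
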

\begin{proof}
Assume (a) holds and let $s=irj,s'=jr^{-1}i.$ Since ${}^rj=i$ is equivalent to $rj=ir$ and to $jr^{-1}=r^{-1}i,$ we get
$$s=irj=rj=ir,\quad s'=jr^{-1}i=jr^{-1}=r^{-1}i$$
and, moreover
$$ss'=(irj)(jr^{-1}i)=i\cdot ({}^rj)\cdot i=i$$
$$s's=(jr^{-1}i)(irj)=j\cdot ({}^{r^{-1}}i)\cdot j=j.$$
The relation given in (a) 
\begin{equation}\label{eq*}
(\phi(v)v^{-1})\cdot {}^v(ir)=rj,
\end{equation}
for any $v\in V,$ is equivalent to $s\in iA^{\Delta(\phi)}j$. We shall prove that relation (\ref{eq*}) is equivalent to $s'\in jA^{\Delta(\phi^{-1})}i,$ that is 
$$(\phi^{-1}(u)u^{-1})\cdot{}^u(jr^{-1})=r^{-1}i,$$
for all $u\in U.$ Indeed, when  $v$ runs in $V,$ the elements $u=\phi(v)$ run in $U$ and, we have
$$(\phi(v)v^{-1})\cdot {}^v(ir)=rj \iff (r^{-1}i)\cdot {}^{\phi(v)}r\cdot (\phi(v)v^{-1})=j\iff$$ 
$$ (r^{-1}i)\cdot {}^{\phi(v)}r=j v\phi(v^{-1})\iff r^{-1}i=j v\phi(v^{-1})\cdot {}^{\phi(v)}r^{-1}\iff r^{-1}i=j \cdot ({}^vr^{-1})\cdot v\phi(v^{-1})$$$$\iff r^{-1}i={}^v(jr^{-1})\cdot v\phi(v^{-1})
\iff r^{-1}i=v\phi(v^{-1})\cdot {}^{\phi(v)}(jr^{-1})$$
$$\iff r^{-1}i=\phi^{-1}(u)u^{-1}\cdot {}^u(jr^{-1}).$$
Assume (b) and since $$i=ss',j=s's,$$ by \cite[Exercise 3.2]{The} there is $q\in A^{\times}$ such that $i=qjq^{-1}.$ Then $$r:=s+(1_A-i)q(1_A-j),\quad r':=s'+(1_A-j)q^{-1}(1_A-i)$$ verifies $rr'=1_A$ and $1_A=r'r.$ Furthermore, for any $v\in V$  relation (\ref{eq*}) is verified, because $$ir=s,\quad s=rj,\quad (\phi(v),v) s=s,$$ by our assumptions.
\end{proof}

\section{Subcategories of fusion systems induced by factor groups} \label{sec3}

 We need the concept of a category $\F$ on a $p$-group  introduced by Linckelmann in \cite[Definition 1.1]{Lisi};  that is, $\F$ is a \textit{category on the} $p$-\textit{group} $D$ if it is a category whose objects are the subgroups of $D$ and whose morphisms are the injective group homomorphisms satisfying:
 \begin{itemize}
 \item[$\bullet$] the inclusions are morphisms of $\F;$
 \item[$\bullet$] for any $\phi\in\Hom_{\F}(R_1,R_2), R_1,R_2\leq D$, the induced isomorphism $R_1\cong\phi(R_1)$ and its inverse are morphisms in $\F;$
 \item[$\bullet$] composition of morphisms in $\F$ is the usual composition of group homomorphisms.
\end{itemize}   A \textit{subcategory} $\mathcal{E}$ of $\F$ \textit{on the} $p$-\textit{group} $D$ is a subcategory which is itself a category on the $p$-group $D$. In this section, if otherwise not stated, we denote by $\F$ a \textit{fusion system} on a $p$-group $D$.  It is a  category  on  the  $p$-group $D,$ in which the set of objects consists of the subgroups of $D$ and,
the morphisms are given by the set of all injective group homomorphisms, such that $\mathcal{F}_D(D)\subseteq \mathcal{F}$ and other axioms are satisfied, see \cite[Definition 1.34]{Cr}. Here $\mathcal{F}_D(D) $ is the  subcategory of $\mathcal{F}$ with the same objects as  $\mathcal{F}$, except for the morphisms, which are the group homomorphisms induced by conjugation in $D.$ Any fusion system like $\F$ is called \textit{saturated} \cite[Definition 1.37]{Cr} if other two axioms are satisfied. Let $\tilde{D}$ be a normal subgroup of $D$ and, as in Introduction  we use the notation $\bar{D}:=D/\tilde{D}.$

\subsection{Subcategories} \label{subsec31}
In Proposition \ref{prop31} (i) we will show that  our next definition makes sens.
\begin{defn}\label{defnsubca} We define a \textit{subcategory} $\F_{D\times^{\bar{D}}D}$ 
of $\F$  by:
\begin{itemize}
\item[(i)] $\Ob(\F_{D\times^{\bar{D}}D}):=\Ob(\F);$
\item[(ii)] for any $R_1,R_2\leq D$, the morphisms set is
$$\Hom_{\F_{D\times^{\bar{D}}D}}(R_1,R_2):=\{\phi\in\Hom_{\F}(R_1,R_2)|\Delta(\phi)\leq D\times^{\bar{D}}D\}.$$
\end{itemize}
\end{defn}
Recall that the focal subgroup of a fusion system $\F$ is
\[\mathrm{foc}(\F):=<u\varphi(u^{-1})|R\leq D,u\in R,\varphi\in \Aut_{\F}(R)>.\]

\begin{prop}\label{prop31}
\begin{itemize}
\item[(i)] $\F_{D\times^{\bar{D}}D}$ is a subcategory of $\F$ on the $p$-group $D;$
\item[(ii)] If $D'\leq \tilde{D}$ then $\F_{D\times^{\bar{D}}D}$ is a fusion subsystem of $\F;$
\item[(iii)] Assume $\F$ is saturated. If $\mathrm{foc}(\F)\leq \tilde{D}$ then  $\F_{D\times^{\bar{D}}D}=\F;$
\item[(iv)] If $\tilde{D}=1$ then $\F_{D\times^{\bar{D}}D}=\F_{\Delta(D)},$ where $\F_{\Delta(D)}$ is the subcategory of $\F$  on  the $p$-group $D,$ with the same objects as $\F$ and, morphisms consisting of inclusion maps.
\end{itemize}
\end{prop}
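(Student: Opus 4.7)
The plan is to unpack the defining condition: a morphism $\phi\in\Hom_\F(R_1,R_2)$ belongs to $\F_{D\times^{\bar D}D}$ exactly when $\phi(u)u^{-1}\in\tilde D$ for every $u\in R_1$, equivalently when $\phi$ induces the identity on the quotient $\bar D$. All four statements then reduce to tracking this congruence modulo $\tilde D$, and I would treat them in the order (i), (iv), (ii), (iii), so that the heavy machinery is used only once, at the end.

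Parts (i) and (iv) are direct verifications. For (i) I would check the three subcategory axioms: inclusions trivially satisfy $\iota(u)u^{-1}=1\in\tilde D$; for inverses, setting $v=\phi(u)$ yields $\phi^{-1}(v)v^{-1}=(\phi(u)u^{-1})^{-1}\in\tilde D$; and for composition I would write
\[
(\psi\circ\phi)(u)u^{-1}=\bigl(\psi(\phi(u))\phi(u)^{-1}\bigr)\cdot\bigl(\phi(u)u^{-1}\bigr),
\]
where both factors lie in $\tilde D$ by hypothesis. For (iv), the assumption $\tilde D=1$ gives $\bar D=D$ and $D\times^{\bar D}D=\Delta(D)$, so the condition $(\phi(u),u)\in\Delta(D)$ forces $\phi(u)=u$ for every $u$; hence every morphism of $\F_{D\times^{\bar D}D}$ is an inclusion, which is exactly $\F_{\Delta(D)}$.

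For (ii), reading $D'$ as the commutator subgroup $[D,D]$, note that by (i) the category $\F_{D\times^{\bar D}D}$ already satisfies the subcategory axioms, so it qualifies as a fusion subsystem as soon as it contains $\F_D(D)$. For any $x\in D$ with ${}^xR_1\leq R_2$, the conjugation map $c_x$ satisfies $c_x(u)u^{-1}=xux^{-1}u^{-1}=[x,u]\in D'\leq\tilde D$ for all $u\in R_1$, so $c_x\in\F_{D\times^{\bar D}D}$. This yields $\F_D(D)\subseteq\F_{D\times^{\bar D}D}$, and with (i) the fusion subsystem property is established.

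The main obstacle is (iii), because $\mathrm{foc}(\F)$ is defined using only $\F$-automorphisms while the subcategory condition must be checked against arbitrary $\F$-morphisms. My plan is to invoke Alperin's fusion theorem for the saturated fusion system $\F$: any $\phi\colon R_1\to R_2$ decomposes as $\phi=\iota\circ\psi_n\circ\cdots\circ\psi_1$, where each $\psi_i$ is a restriction of an $\F$-automorphism $\varphi_i\in\Aut_\F(S_i)$ and $\iota$ is an inclusion. Setting $u_0=u$ and $u_i=\psi_i(u_{i-1})$, I would then telescope
\[
\phi(u)u^{-1}=(u_nu_{n-1}^{-1})(u_{n-1}u_{n-2}^{-1})\cdots(u_1u_0^{-1}),
\]
and observe that each factor has the shape $\varphi_i(u_{i-1})u_{i-1}^{-1}\in\mathrm{foc}(\F)\leq\tilde D$. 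The subtle point, and the reason this is the hardest step, is ensuring that the intermediate elements $u_i$ actually lie in $\dom(\psi_{i+1})$, which is precisely what Alperin's theorem guarantees. Granting this, $\phi(u)u^{-1}\in\tilde D$ for every $u$, so $\phi\in\F_{D\times^{\bar D}D}$; combined with the trivial inclusion $\F_{D\times^{\bar D}D}\subseteq\F$, this yields the desired equality $\F_{D\times^{\bar D}D}=\F$.
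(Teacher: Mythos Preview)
Your proposal is correct and follows essentially the same approach as the paper. The only notable difference is in (iii): the paper observes that, since $\F_{D\times^{\bar D}D}$ is already a subcategory on $D$ by (i) (hence closed under composition, restriction, and inclusions), Alperin's theorem reduces the problem to checking $\Aut_\F(R)\subseteq\Aut_{\F_{D\times^{\bar D}D}}(R)$ directly, whereas you unroll this closure by explicitly telescoping along an Alperin decomposition; both arguments are equivalent.
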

\begin{proof}\begin{itemize}
\item[(i)] It is clear that $\F_{D\times^{\bar{D}}D}$ is included in $\F$ and that statements (i), (ii) of \cite[Definition 1.1]{Lisi} are straightforward. We only verify \cite[Definition 1.1, (iii)]{Lisi} and for this, let $$\phi\in \Hom_{\F_{D\times^{\bar{D}}D}}(R_2,R_3), \psi\in \Hom_{\F_{D\times^{\bar{D}}D}}(R_1,R_2) $$ with $R_1,R_2,R_3\leq D.$ Clearly, $\phi \circ \psi\in \Hom_{\F}(R_1,R_3)$  and, for any $u\in R_1,$ we have
$$u^{-1}\phi(\psi(u))=(u^{-1}\psi(u))\cdot ((\psi(u))^{-1}\phi(\psi(u)))\in \tilde{D},$$ which is what we need.
\item[(ii)] Since (i) holds, we only verify \cite[Definition 1.34, (i)]{Cr} and for this let $c_x:R_1\rightarrow R_2,$ where ${}^xR_1\leq R_2\leq D, x\in D.$ For any $u\in R_1$ the following  equality holds
$$u^{-1}c_x(u)=u^{-1}xux^{-1}=[u^{-1},x]\in D'\leq \tilde{D},$$
hence $c_x\in \Hom_{\F_{D\times^{\bar{D}}D}}(R_1,R_2).$ 
\item[(iii)] Clearly $\F_{D\times^{\bar{D}}D}\subseteq \F.$ By   Alperin's Fusion Theorem \cite[Theorem 8.2.8]{Libo2} it is enough to show that $\mathrm{Aut}_{\F}(R)\subseteq \Aut_{\F_{D\times^{\bar{D}}D}}(R),$ where $R\leq D$; for this let $\phi\in\mathrm{Aut}_{\F}(R)$ and $u\in R$. It is clear that
$$u^{-1}\phi(u)\in 
\mathrm{foc}(\F)\leq \tilde{D},$$
hence the conclusion.
\item[(iv)] Since $\tilde{D}=1$ we know that $D\times^{\bar{D}}D=\Delta(D).$
\end{itemize}
\end{proof}
\subsection{Normalizer subcategories} \label{subsec32}
Let $R$ be a subgroup of $D.$ Similarly as above, in  Proposition \ref{prop32} (i) we will show that the following definition makes sens.
\begin{defn}\label{defnormaliz} The \textit{normalizer subcategory} $N_{\F_{D\times^{\bar{D}}D}}(R)$ is the subcategory of $\F_{D\times^{\bar{D}}D}$ defined by: 
\begin{itemize}
\item[(i)] $\Ob(N_{\F_{D\times^{\bar{D}}D}}(R)):=\{R_1|R_1\leq N_D(R)\};$
\item[(ii)] for any $R_1,R_2\leq N_D(R)$, a morphism $\phi:R_1\rightarrow R_2$ in $N_{\F_{D\times^{\bar{D}}D}}(R)$ is  a morphism $\phi\in \Hom_{\F_{D\times^{\bar{D}}D}}(R_1,R_2)$ which extends to some $\phi'\in\Hom_{\F_{D\times^{\bar{D}}D}}(RR_1,RR_2)$  such that $\phi'|_{R}(R)=R.$
\end{itemize}
\end{defn}
\begin{prop}\label{prop32}
\begin{itemize}
\item[(i)] $N_{\F_{D\times^{\bar{D}}D}}(R)$ is a subcategory of $\F_{D\times^{\bar{D}}D}$ on the $p$-group $N_D(R);$
\item[(ii)] If $D'\leq \tilde{D}$ then $N_{\F_{D\times^{\bar{D}}D}}(R)$ is a fusion subsystem of $\F;$
\item[(iii)] Assume $\F$ is saturated. If $\mathrm{foc}(\F)\leq \tilde{D}$ then  $N_{\F_{D\times^{\bar{D}}D}}(R)=N_{\F}(R),$ the usual normalizer fusion system of $R$ in $\F$, see \cite[Definition 4.26 (ii)]{Cr};
\item[(iv)] If $\tilde{D}=1$ and $R=D$ then $N_{\F_{\Delta(D)}}(D)=\F_{\Delta(D)}.$
\end{itemize}
\end{prop}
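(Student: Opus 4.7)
My plan is to handle the four parts by direct verification, using Proposition \ref{prop31} whenever the question reduces to one about $\F_{D\times^{\bar{D}}D}$.

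For (i), I would verify the three axioms of a category on the $p$-group $N_D(R)$ in the sense of \cite[Definition 1.1]{Lisi}. The inclusion $R_1\hookrightarrow R_2$ of subgroups of $N_D(R)$ extends to the inclusion $RR_1\hookrightarrow RR_2$, which sends $R$ to $R$ setwise and already lies in $\F_{D\times^{\bar{D}}D}$ by Proposition \ref{prop31}. Given $\phi\in\Hom_{N_{\F_{D\times^{\bar{D}}D}}(R)}(R_1,R_2)$ with extension $\phi':RR_1\to RR_2$ satisfying $\phi'(R)=R$, the isomorphism $R_1\cong\phi(R_1)$ is extended by the corestriction of $\phi'$ to $R\phi(R_1)$, and the inverse is extended by $(\phi')^{-1}$; both extensions fix $R$ setwise. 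For composition, stacking the extensions $\phi':RR_1\to RR_2$ and $\psi':RR_2\to RR_3$ yields an extension $\psi'\circ\phi':RR_1\to RR_3$ of $\psi\circ\phi$ that still sends $R$ to $R$, so closure under composition follows from the corresponding property of $\F_{D\times^{\bar{D}}D}$ (Proposition \ref{prop31}(i)).

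For (ii), the hypothesis $D'\le\tilde D$ lets me invoke Proposition \ref{prop31}(ii), so $\F_{D\times^{\bar{D}}D}$ is a fusion subsystem of $\F$, and hence $N_{\F_{D\times^{\bar{D}}D}}(R)\subseteq\F$. It remains to check that every conjugation morphism induced by $N_D(R)$ is in $N_{\F_{D\times^{\bar{D}}D}}(R)$, i.e.\ that $\F_{N_D(R)}(N_D(R))$ sits inside it. For $x\in N_D(R)$ with ${}^xR_1\le R_2\le N_D(R)$, the map $c_x:R_1\to R_2$ extends to $c_x:RR_1\to RR_2$ because $x$ normalises $R$, and this extension lies in $\F_{D\times^{\bar{D}}D}$ again by Proposition \ref{prop31}(ii); since $c_x(R)=R$ the extended morphism witnesses that $c_x$ belongs to $N_{\F_{D\times^{\bar{D}}D}}(R)$.

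For (iii), under the hypothesis $\mathrm{foc}(\F)\le\tilde D$ Proposition \ref{prop31}(iii) gives $\F_{D\times^{\bar{D}}D}=\F$, so Definition \ref{defnormaliz} reduces to: morphisms $\phi\in\Hom_\F(R_1,R_2)$ that extend to some $\phi'\in\Hom_\F(RR_1,RR_2)$ with $\phi'(R)=R$. This is precisely \cite[Definition 4.26(ii)]{Cr}, so the two subcategories coincide object-wise and morphism-wise. For (iv), when $\tilde D=1$ Proposition \ref{prop31}(iv) gives $\F_{D\times^{\bar{D}}D}=\F_{\Delta(D)}$, whose morphisms are only the inclusions, and $N_D(D)=D$ gives all subgroups of $D$ as objects. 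Each inclusion $R_1\hookrightarrow R_2$ extends to $\id_D:DR_1=D\to DR_2=D$, which fixes $D$ setwise; hence every morphism of $\F_{\Delta(D)}$ belongs to $N_{\F_{\Delta(D)}}(D)$, yielding equality.

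The only genuine subtlety I anticipate is in (i) and (ii): the requirement that an extension exist rather than that the morphism itself normalise $R$ means I must keep track of the extension throughout composition and inversion, which is why I take some care above to compose the chosen extensions rather than just the base morphisms. The remaining items are then essentially bookkeeping on top of Proposition \ref{prop31}.
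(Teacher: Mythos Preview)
Your proposal is correct and follows essentially the same approach as the paper: the paper's proof merely instructs the reader to verify the axioms of \cite[Definition 1.1]{Lisi} using Definitions \ref{defnsubca}, \ref{defnormaliz}, \cite[Definition 4.26(ii), Theorem 4.28(i)]{Cr} and Proposition \ref{prop31}(ii)--(iv), and you have carried out precisely that verification in detail. The only minor difference is that the paper cites \cite[Theorem 4.28(i)]{Cr} (presumably to identify $N_{\F}(R)$ as a fusion system), whereas you bypass this by checking directly that the conjugation morphisms lie in the normalizer subcategory; both routes are straightforward.
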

\begin{proof} Statement (i) is obtained by verifying all the conditions of  \cite[Definition 1.1]{Lisi}, with the help of Definition \ref{defnsubca} and \ref{defnormaliz}. Statements (ii), (iii) and (iv) are easy to show using \cite[Definition 4.26 (ii)]{Cr}, \cite[Theorem 4.28 (i)]{Cr} and Proposition \ref{prop31} (ii), (iii), (iv).
\end{proof}

\section{Proof of Theorem \ref{thm12}}\label{sec4}
\begin{proof}
We already know that since $\tilde{A}$ is a direct summand of $\Oc G$ as $\Oc[D\times^{\bar{D}}D]$-module then $\tilde{A}$ has a $D\times^{\bar{D}}D$-stable basis which is $\bar{D}$-bifree, see \ref{2.6}. By Theorem \ref{thm25} we need to verify the next statement: given $\phi\in \J_{D\times^{\bar{D}}D}(D),$ such that $\tilde{A}(\phi)\neq 0,$ then $\tilde{A}^{\times}\cap \tilde{A}^{\Delta(\phi)}\neq \emptyset.$

Let $\phi:R_1\rightarrow R_2, \phi\in \J_{D\times^{\bar{D}}D}(D)$ such that $\tilde{A}(\phi)\neq 0.$ Now $\tilde{A}$ is a direct summand of $A$ as an $\Oc[D\times^{\bar{D}}D]$-modules, hence $\tilde{A}(\phi)$ is a direct summand of $A(\phi)$. It follows  $A(\phi)\neq 0,$ hence $\Omega^{\Delta(\phi)}\neq\emptyset$, for some $D\times D$-stable basis of $A$. Applying \cite[Theorem 7.2]{BaGe} we obtain $\phi\in\Hom_{\F_{D\times^{\bar{D}}D}}(R_1,R_2).$ Next, since $\F_{D\times^{\bar{D}}D}=N_{\F_{D\times^{\bar{D}}D}}(D),$ it follows that there is $\psi:R_1D\rightarrow R_2D$ in $\F_{D\times^{\bar{D}}D}$ such that $$\psi(D)=D,\quad\psi_{|_{R_1}}=\phi.$$ So there is $\psi\in \Aut_{\F}(D)$ such that $$\psi_{|_{R_1}}=\phi,\quad u^{-1}\psi(u)\in\tilde{D},$$ for any $u\in D.$

Let $\lambda_D$ be the point  $\{ala^{-1}|a\in ((\Oc Gb)^D)^{\times}\},$ which is the unique point of $D$ on $\Oc Gb$ associated to $(D,e_D).$ Since $D$ is a defect group of $b,$ we know that $N_G(D,e_D)=N_G(D_{\lambda_D}),$ hence there is $g\in N_G(D_{\lambda_D})$ such that $\psi=c_g.$ Then $\psi\in F_{\Oc G b}(D_{\lambda_D})$ (see \cite[Theorem 3.1]{Pulo}); equivalently, we say that $\psi:D_{\lambda_D}\rightarrow D_{\lambda_D}$ is an $\Oc Gb$-isofusion, which also verifies $\Delta(\psi)\leq D\times^{\bar{D}}D.$ Since $l=1_{\tilde{A}}$ is a primitive idempotent of $\tilde{A}$ then $\tilde{\lambda}_D=\{l\}$ is a point of $D$ on $\tilde{A}$ such that $$\tilde{\lambda}_D=\lambda'_{D}\subseteq \lambda_D,$$ where $\lambda'_{D}=\{l\}$ is considered as a point of $D$ on $A.$ We apply now \cite[Proposition 2.14] {Pulo} and \cite[Proposition 4.2]{Puhy} to obtain $$F_{\tilde{A}}(D_{\tilde{\lambda}_D})=F_A(D_{\lambda'_D})=F_{\Oc Gb}(D_{\lambda_D}).$$ It follows that $\psi:D_{\tilde{\lambda}_D}\rightarrow D_{\tilde{\lambda}_D}$ is an $\tilde{A}$-isofusion (see \cite[2.2.1]{Puhy}) verifying $\Delta(\psi)\leq D\times^{\bar{D}}D.$ 

This means that $\psi$ is an isomorphism lying in $\J_{ D\times^{\bar{D}}D}(D)$ and satisfying a statement similar  to statement (a) of Proposition \ref{lem41}. Then, there exist
$$s\in l(\tilde{A}^{\Delta(\psi)})l=\tilde{A}^{\Delta(\psi)}, \quad s'\in l(\tilde{A}^{\Delta(\psi^{-1})})l=\tilde{A}^{\Delta(\psi^{-1})},$$ such that $$l=ss'=s's,$$  thus $s\in \tilde{A}^{\times}.$ Since $\Delta(\phi)\subseteq \Delta(\psi),$ 
we conclude that $s\in  \tilde{A}^{\times}\cap\tilde{A}^{\Delta(\phi)}.$
\end{proof}

\section{Stable unital basis under basic Morita equivalences}\label{sec5}
Let $\gamma:=\{ala^{-1}|a\in ((\Oc Gb)^D)^{\times}\}$ be the point of $D$ on $\Oc Gb$ which contains $l$ and similarly for $\gamma'.$ It follows that $D_{\gamma}$ is a defect pointed group of $G_{\{b\}}$ and $D'_{\gamma'}$ is a defect pointed group of $G'_{\{b'\}}.$

\begin{nim} \label{basicMorita} \textit{Basic Morita equivalences.} We recall the characterization of Morita equivalences between Brauer blocks, see \cite[Theorem 3.2]{Pusu}.
If $\Oc Gb$ is Morita equivalent to $\Oc G'b'$ through an $\Oc Gb-\Oc G' b'$-bimodule $M''$ (which can be viewed as an indecomposable $\Oc[G\times G']$-module, such that $bM''b'=M''$) then there is a suitable $p$-subgroup $D''$ in $G\times G'$ such that $$\pi(D'')=D, \pi'(D'')=D'$$ and, there is a suitable indecomposable $\Oc D''$-module $N''$ such that $\Res^{D''}_{D''\cap (G\times 1)}N''$ and $\Res^{D''}_{D''\cap (1\times G')}N''$ are projective. Moreover, there is an interior $D$-algebra embedding involving  the $D''$-interior algebra $S:=\End_{\Oc}(N''),$ see \cite[3.2.1]{Pusu}. 
We shall use the following notations for  the surjective homomorphisms of $p$-groups
$$\sigma:D''\rightarrow D, \sigma:=\pi_{|_{D''}},\quad \sigma':D''\rightarrow D', \sigma':=\pi'_{|_{D''}}. $$
We say that $\Oc Gb$ is \textit{basic Morita equivalent} to $\Oc G'b',$ see \cite[Corollary 3.6]{Pusu}, if $\Oc Gb$ is Morita equivalent to $\Oc G'b'$ through  a $\Oc Gb-\Oc G' b'$-bimodule $M''$ (with the above notations) and $\sigma $ (or $\sigma'$) is bijective. 
\end{nim}
We collect, from various references, the following properties, which will be useful for the next proofs.
\begin{nim}\label{propbasic}\textit{Properties of basic Morita equivalences.} We assume  in this Subsection that $\Oc Gb$ is basic Morita equivalent to $\Oc G'b',$ through the $\Oc Gb-\Oc G' b'$-bimodule $M'',$ as in \ref{basicMorita}.
\begin{itemize}
\item[1)] The map $\sigma:D''\rightarrow D$ is an isomorphism if and only if $\sigma':D''\rightarrow D'$ is an isomorphism. We denote by 
$$\lambda:D'\rightarrow D, \lambda':=\sigma\circ (\sigma')^{-1},\quad \lambda':D\rightarrow D', \lambda':=\sigma'\circ \sigma^{-1}$$
the induced isomorphisms between the defect groups.

\item[2)] There is an embedding of $D$-interior algebras 
\begin{equation}\label{eq2}
f:A\rightarrow S\otimes_{\Oc} A'
\end{equation}
and an embedding of $D'$-interior algebras
\begin{equation}\label{eq2'}
f':A'\rightarrow S^{op}\otimes_{\Oc} A.
\end{equation}
\item[3)] For any $R$, a subgroup of $D,$ we denote the isomorphic subgroups of $D''$ and $D'$ by $R'':=\sigma^{-1}(R),$ respectively $R':=\lambda'(R).$ We denote by $\mathcal{LP}_A(R)$ the set of local points of $R$ on $A$ and similarly for $\mathcal{LP}_{A'}(R').$ By \cite[7.6.2]{Puon} there is a bijection between these two sets of local points
\begin{equation}\label{eq3}
\mathcal{LP}_A(R)\hookdoubleheadrightarrow \mathcal{LP}_{A'}(R')
\end{equation}
Moreover, for any $R_1,R_2\leq D$, if $\delta'\in \mathcal{LP}_{A'}(R_1') $ and  $\epsilon'\in \mathcal{LP}_{A'}(R_2')$ corresponds uniquely to $\delta\in \mathcal{LP}_{A}(R_1),$ respectively $\epsilon\in \mathcal{LP}_A(R_2),$  through the bijection  in (\ref{eq3})  then, there is also a bijection induced by (\ref{eq2}) and (\ref{eq2'})  between the set of isofusions 
\begin{equation}\label{eq4}
F_{A}((R_1)_{\delta}, (R_2)_{\epsilon})\hookdoubleheadrightarrow F_{A'}((R'_1)_{\delta'}, (R'_2)_{\epsilon'}),
\end{equation}
see \cite[Proposition 2.14]{Pulo} and \cite[Lemma 1.17]{KuPu}
\item[4)] By \cite[Theorem 19.7]{Puon} we know that if $b$ is basic  Morita (hence Rickard) equivalent to $b'$ then $\lambda'$ induces an equivalence between the Brauer categories of $b$ and $b'.$ In our case $\lambda': D\rightarrow D'$ induces an isomorphism of saturated fusion systems
\begin{equation}\label{eq5}
\F\cong \F'\quad (R,e_R)\mapsto (R',e'_{R'}), R'=\lambda'(R)
\end{equation}
\end{itemize}
\end{nim}
Given  a pointed group $U_{\mu}$ on any $G$-algebra $B,$ we define the \textit{multiplicity } of $U_{\mu},$ denoted $m_{B}(U_{\mu}),$ to be the number of elements of $\mu$ appearing in a primitive idempotent decomposition  of the unity element of the $U$-fixed subalgebra $B^U,$ see \cite[$\S$4 ]{The}. Given a second pointed group $T_{\tau}$ on $B$ with 
$T_{\tau}\leq U_{\mu}$, we denote the \textit{relative multiplicity} of
$T_{\tau}$ in $U_{\mu}$, denoted $m_{B}(T_{\tau},U_{\mu})$, 
to be the number of elements of $\tau$ that appear in a primitive idempotent decomposition of $i$ in the algebra $B^T,$  where $i$ is any element of $\mu$.

\begin{prop} \label{propmultiplicity} Let $b,b'$ be basic Morita equivalent blocks admitting defect pointed groups groups $ D_{\gamma}$ and $D'_{\gamma'},$ respectively. Let $R_{\delta}\leq D_{\gamma}, R_{\delta'}\leq D'_{\gamma'}$ be local pointed groups determined by the bijection (\ref{eq3}).
If $m_{\delta}:=m_{\Oc G}(R_{\delta}, D_{\gamma})=m_A(R_{\delta})$ and $m_{\delta'}:=m_{\Oc G'}(R'_{\delta'}, D'_{\gamma'})$ $=m_{A'}(R'_{\delta'})$ then $m_{\delta}=m_{\delta'}.$
\end{prop}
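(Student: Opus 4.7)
The plan is to derive the equality by sandwiching $m_A(R_\delta)$ and $m_{A'}(R'_{\delta'})$ between each other, using the two interior-algebra embeddings (\ref{eq2}) and (\ref{eq2'}) combined with the Dade-algebra structure of $S$.

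First I would reformulate both multiplicities as matrix sizes in the semisimple quotients of the corresponding Brauer quotients: $m_A(R_\delta)=\dim_k V_\delta$, where $V_\delta$ is the simple $A(R)$-module attached to $\delta$, and analogously $m_{A'}(R'_{\delta'})=\dim_k V_{\delta'}$ with $V_{\delta'}$ the simple $A'(R')$-module attached to $\delta'$. This is a standard reformulation of multiplicities for local pointed groups on interior $p$-algebras.

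The structural fact driving the argument is the following. Since $\sigma$ and $\sigma'$ are isomorphisms, $D''$ meets $G\times 1$ and $1\times G'$ trivially, so $N''$ is an indecomposable endo-permutation $\Oc D''$-module and $S=\End_\Oc(N'')$ is a Dade $D''$-algebra with $S(Q)\cong k$ for every $Q\leq D''$. Transporting via $\sigma^{-1}$ to a Dade $D$-algebra and invoking the standard Brauer-quotient formula for tensor products with Dade algebras, one obtains, for every $R\leq D$, a $k$-algebra isomorphism
\[
(S\otimes_\Oc A')(R)\cong S(R)\otimes_k A'(R')\cong A'(R'),
\]
where $R$ acts on $A'$ through $\lambda':D\to D'$. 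Applying $\Br_R^{S\otimes A'}$ to $f$ then yields a $k$-algebra isomorphism $A(R)\cong \bar{e}\cdot A'(R')\cdot \bar{e}$, with $\bar{e}:=\Br_R^{S\otimes A'}(f(1_A))$. Under the bijection (\ref{eq3}), $\delta$ corresponds to $\delta'$ through this identification, so the simple $A(R)$-module $V_\delta$ is isomorphic to $\bar{e}V_{\delta'}$, forcing $m_A(R_\delta)=\dim_k \bar{e}V_{\delta'}\leq \dim_k V_{\delta'}=m_{A'}(R'_{\delta'})$.

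By symmetry, the same argument applied to the reverse embedding $f':A'\to S^{op}\otimes_\Oc A$ yields $m_{A'}(R'_{\delta'})\leq m_A(R_\delta)$, and combining these inequalities gives the desired equality $m_\delta=m_{\delta'}$. The main obstacle is the bookkeeping in the structural step: justifying that $S$ is a Dade $D$-algebra with $S(R)\cong k$ for all $R\leq D$, together with the compatible isomorphism $(S\otimes_\Oc A')(R)\cong A'(R')$, and aligning this with the bijection (\ref{eq3}) through the group isomorphisms $\sigma,\sigma',\lambda,\lambda'$. All these are implicit in the basic Morita equivalence framework of \cite{Puon}, but one must carry the various group actions and local-point correspondences through consistently so that the identification of $\delta$ with $\delta'$ really does match the simple factors of the two Brauer quotients.
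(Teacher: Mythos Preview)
Your two-sided sandwiching strategy via the embeddings $f$ and $f'$ is exactly the plan the paper follows, but the executions diverge and yours contains a real gap. The paper never passes to Brauer quotients: it works entirely inside $(S\otimes_{\Oc}A')^{R'}$, writes $l_1=\sum_{k=1}^{m_\delta}i_k$ and $l_1'=\sum_{k'=1}^{m_{\delta'}}i'_{k'}$ for the $\delta$- and $\delta'$-parts of primitive decompositions of $l$ and $l'$, uses the point correspondence of \cite[Theorem~5.3]{Puni} between $R'_{\bar\delta}$ and $R'_{\delta'}$ to obtain $f(l_1)\,g(l_1')=f(l_1)$ with $g(a')=1_S\otimes a'$, and then compares the number of primitive summands in $\bar\delta$ lying under $g(l_1')$ with the $m_\delta$ primitives comprising $f(l_1)$.

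The gap in your route is the assertion that $S(Q)\cong k$ for every $Q\le D''$. This already fails for $Q=1$, where $S(1)\cong M_{\dim_k \bar N''}(k)$, and it can fail for nontrivial $Q$ as well. For instance, take $P=C_p\times C_p$ with $p$ odd, let $Q_1\le P$ have order $p$, and set $M=\mathrm{Inf}^{P}_{P/Q_1}\bigl(\Omega_{P/Q_1}(k)\bigr)$. Then $M$ is an indecomposable endo-permutation $kP$-module with vertex $P$, but $\Res^P_{Q_1}M\cong k^{\,p-1}$, so $\End_k(M)(Q_1)\cong M_{p-1}(k)$. In general one only has $S(R)\cong M_n(k)$ for some $n\ge 1$, whence $(S\otimes_\Oc A')(R)\cong M_n\bigl(A'(R')\bigr)$, and your dimension comparison collapses to $m_\delta\le n\cdot m_{\delta'}$; symmetrizing gives $m_{\delta'}\le n\cdot m_\delta$, which does not force equality. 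So the ``bookkeeping'' you flag at the end is not merely bookkeeping: the structural input $S(R)\cong k$ is not supplied by the basic Morita framework, and without it the Brauer-quotient argument does not close. The paper's idempotent-level argument sidesteps this by tracking the $\bar\delta$-components directly via the point correspondence rather than via a blanket statement about $S(R)$.
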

\begin{proof}
For any subgroup $R\leq D$, a  decomposition of the idempotent $l$ in $(\Oc Gb)^R$ is a decomposition in $A^R$. We fix
$$l=i_1+\ldots+i_{m_{\delta}}+\ldots  +i_r=\sum_{k=1}^{m_{\delta}}i_k+\ldots+i_r$$
a primitive idempotent decomposition of $l$ in $A^R$ (of length $r$). Denote  by $l_1$ the sum $\sum_{k=1}^{m_{\delta}}i_k,$ which is an idempotent in $A^R;$ with each $i_k\in \delta, k\in\{1,\ldots, m_{\delta}\}.$ Using the embedding of $D$-interior algebras given in (\ref{eq2}),  it follows $f(\delta)\subseteq \bar{\delta},$ where $\bar{\delta}$ is a point of $R'$ on $S\otimes_{\Oc} A'$ and $R'_{\delta'}$ is the unique pointed group on $(A')^{R'}$ which corresponds to $R'_{\bar{\delta}},$ see \cite[Theorem 5.3]{Puni}.

We also fix
$$l'=i'_1+\ldots+i'_{m_{\delta'}}+\ldots  +i'_{r'}=\sum_{k'=1}^{m_{\delta'}}i'_{k'}+\ldots+i'_{r'}$$
a primitive idempotent decomposition of $l'$ in $(A')^{R'}$ (of length $r'$). Denote by $l'_1$ the sum $\sum_{k'=1}^{m_{\delta'}}i'_{k'}$,  which is an idempotent in $(A')^{R'};$ with each $i'_{k'}\in\delta', k'\in\{1,\ldots, m_{\delta'}\}$.

The map $$g:A'\rightarrow S\otimes_{\Oc} A',\quad g(a')=1_S\otimes a'$$
is an injective homomorphism (of $R'$-algebras), with its restriction to $(A')^{R'}$ verifying $g_{|_{(A')^{R'}}}((A')^{R'})\subseteq(S\otimes_{\Oc} A')^{R'}.$
It follows  that we can identify $A'$ with a unital subalgebra of $S\otimes_{\Oc} A',$ $(A')^{R'}$ with a unital subalgebra of $(S\otimes_{\Oc} A')^{R'},$ hence $g(l')=1\otimes l'.$ This means that the idempotent $l'$ can be identified with $1\otimes l'$ through $g$.
In $(S\otimes_{\Oc} A')^{R'}$ we have

$$1_{S\otimes_{\Oc} A'}=1\otimes l'=1\otimes l_1'+\ldots +1\otimes i_{r'}'= g(l_1')+\ldots +g(i'_{r'})$$

and \begin{equation}\label{eq6}
f(l_1)(1\otimes l')=f(l_1)=(1\otimes l') f(l_1)
\end{equation}

But $f(l_1)=\sum_{k=1}^{m_{\delta}}f(i_k)$ is a primitive idempotent decomposition in $(S\otimes_{\Oc} A')^{R'},$ with each $f(i_k)\in\bar{\delta}, k\in\{1,\ldots, m_{\delta}\}.$ It follows that  the correspondence between $R'_{\bar{\delta}}$ and $R'_{\delta'}$ forces, using (\ref{eq6}), to obtain
$$f(l_1)g(l_1')=g(l_1')f(l_1)=f(l_1).$$

Then $f(l_1)$ is an idempotent which appears in the primitive idempotent decomposition  of $g(l_1')$ in $(S\otimes_{\Oc} A')^{R'},$ which forces $m_{\delta}\leq m_{\delta'}.$

Analogously, using the embedding
(\ref{eq2'}), see \cite[7.3.3]{Puon}, we obtain $m_{\delta'}\leq m_{\delta}.$

\end{proof}

\begin{proof}\textbf{(of Theorem \ref{thm16})}
We will show that statement (a) of \cite[Theorem 1.5]{BaGe} is true for $A'.$ 

Let $\phi':R_1'\rightarrow R_2'$ be an $\F'$-isomorphism. Since $\Oc Gb$ is basic Morita equivalent to $\Oc G'b'$ then there is $\phi:R_1\rightarrow R_2$ an $\F$-isomorphism, which corresponds to $\phi'$ through (\ref{eq5}), where $$R_1=\lambda(R_1'), R_2=\lambda(R_2').$$  Applying \cite[Theorem 1.5, (a)]{BaGe} to $\phi$ we obtain a bijective correspondence between the local points $\epsilon$ of $R_2$ on $\Oc G$ satisfying $(R_2)_{\epsilon}\leq D_{\gamma}$ and the local points $\delta$ of $R_1$ on $\Oc G$ satisfying $(R_1)_{\delta}\leq D_{\gamma},$ 
hence a bijection
$$\mathcal{LP}_A(R_2)\hookdoubleheadrightarrow \mathcal{LP}_A(R_1)$$
The correspondence is such that $\epsilon\leftrightarrow \delta$ if and only if $\phi:(R_1)_{\delta}\rightarrow (R_2)_{\epsilon}$ is an isofusion. Furthermore, in the above case 
$$m_{\Oc G}((R_2)_{\epsilon},D_{\gamma})=m_{\Oc G}((R_1)_{\delta},D_{\gamma}),$$
equality that is equivalent to
\begin{equation} \label{eq7}
m_A((R_2)_{\epsilon})=m_{A}((R_1)_{\delta})
\end{equation}
Applying (\ref{eq3}) to $R_1$ and $R_2$ we get
$$\mathcal{LP}_A(R_1)\hookdoubleheadrightarrow\mathcal{LP}_{A'}(R_1'), \quad \mathcal{LP}_A(R_2)\hookdoubleheadrightarrow\mathcal{LP}_{A'}(R_2'),$$ 
hence there is a bijective correspondence between  $\mathcal{LP}_{A'}(R_1')$ and $\mathcal{LP}_{A'}(R_2'),$ 
which is what we need: a bijective correspondence between the local points $\epsilon'$ of $R_2'$ on $\Oc G',$ satisfying $(R_2')_{\epsilon'}\leq D'_{\gamma'}$ and the local points $\delta'$ of $R_1'$ on $\Oc G',$ satisfying $(R_1')_{\delta'}\leq D'_{\gamma'}.$
Recall that, by  (\ref{eq4}), we have a bijection between the sets of isofusions
$$F_{A}((R_1)_{\delta}, (R_2)_{\epsilon})\hookdoubleheadrightarrow
F_{A'}((R_1')_{\delta'}, (R_2')_{\epsilon'}),$$
hence $\phi':(R_1')_{\delta'} \rightarrow (R_2')_{\epsilon'}$ is an isofusion.

Finally, applying Proposition \ref{propmultiplicity}, we obtain

$$m_A((R_1)_{\delta})=m_{A'}((R'_1)_{\delta'})\ \text{and}\ m_{A}((R_2)_{\epsilon})=m_{A'}((R'_2)_{\epsilon'}), $$ \
which, using (\ref{eq7}),  give

$$m_{A'}((R_2')_{\epsilon'})=m_{A'}((R'_1)_{\delta'}).$$
\end{proof}
\section*{Statements and Declarations}

\begin{itemize}
\item \textbf{Funding:}
This work was supported by a grant of the Ministry of Research, Innovation and Digitization,
CNCS/CCCDI–UEFISCDI, project number PN-III-P1-1.1-TE-2019-0136, within PNCDI III.
\item \textbf{Competing interests:}
The authors have no relevant financial or non-financial interests to disclose.
\item \textbf{Availability of data and materials:} Data sharing not applicable to this article as no datasets were generated or analysed during the current study.
\item \textbf{Authors' contributions:}
The authors contributed equally to this work.

\end{itemize}





\end{document}